\newif\ifdraft 
\theoremstyle:=definition,remark,plain\do{%
        \expandafter\g@addto@macro\csname th@\theoremstyle\endcsname{%
            \addtolength\thm@preskip\parskip
            }%
        }
\z@skip \halign{\relax\hfil\txtline@@{##}\hfil\cr\leavevmode#1\crcr}}}
\theoremstyle{definition}
\newtheorem{thm}{Theorem}[section]
\newtheorem{lem}[thm]{Lemma}
\newtheorem{cor}[thm]{Corollary}
\newtheorem{defn}[thm]{Definition}
\newtheorem{propn}[thm]{Proposition}
\newtheorem*{thm*}{Theorem}
\newtheorem{qn}[thm]{Question}
\newtheorem*{qn*}{Question}
\newtheorem*{nts}{Note to self}
\theoremstyle{remark}
\newtheorem{rk}[thm]{Remark}
\newtheorem{rks}[thm]{Remarks}
\newtheorem{ex}[thm]{Example}
\newtheorem{nonex}[thm]{Non-example}
\newtheoremstyle{custthm}{\parskip}{}{\normalfont}{}{\bfseries}{.}{ }{\thmname{#1} \thmnote{#3}}
\theoremstyle{custthm}
\newcommand{\tensor}[1]{\underset{#1}{\otimes}}
\newcommand{\gr}{\mathrm{gr}}
\newcommand{\Aut}{\mathrm{Aut}}
\newcommand{\Kdim}{\mathrm{Kdim}}
\newcommand{\clKdim}{\mathrm{clKdim}}
\newcommand{\gldim}{\mathrm{gldim}}
\newcommand{\Ext}{\mathrm{Ext}}
\newcommand{\pd}{\mathrm{projdim}}
\newcommand{\SPS}{\mathbf{SPS}}
\newcommand{\RSPS}{\mathbf{RSPS}}
\begin{document}

\numberwithin{equation}{section}
\binoppenalty=\maxdimen
\relpenalty=\maxdimen

\title{Dimension theory in iterated local skew power series rings}
\author{William Woods}
\date{\today}
\maketitle
\begin{abstract}
By analysing the structure of the associated graded ring with respect to certain filtrations, we deduce a number of good properties of iterated local skew power series rings over appropriate base rings. In particular, we calculate the Krull dimension, prime length and global dimension in well-behaved cases, obtaining lower bounds to complement the upper bounds obtained by Venjakob and Wang.
\end{abstract}

\tableofcontents

%
%
%
%
%
%

\newpage

\section*{Introduction}

The main objects of study of this paper are \emph{iterated skew power series rings}
$$R = k[[x_1;\sigma_1, \delta_1]]\dots [[x_n; \sigma_n,\delta_n]]$$
for suitable choices of $k$ and $(\sigma_i,\delta_i)$. By this, we mean that $R_0 = k$, and we iteratively define for each $1\leq i\leq n$
\begin{equation}\label{eqn: iteration procedure}
\begin{cases}
\text{$(\sigma_i,\delta_i)$ is a \emph{skew derivation} on $R_{i-1}$,}\\
\text{$R_i$ is the \emph{skew power series ring} $R_{i-1}[[x_i;\sigma_i,\delta_i]]$,}
\end{cases}
\end{equation}
and finally set $R = R_n$. Full definitions of these objects are given below.

Skew power series extensions $B = A[[x;\sigma,\delta]]$ were first introduced in \cite{schneider-venjakob-codim} and \cite{venjakob}, with a view to using them to study Iwasawa algebras: it was noted in \cite{schneider-venjakob-codim} that certain Iwasawa algebras can be written as \emph{iterated} skew power series extensions over a base field (usually $\mathbb{F}_p$) or complete discrete valuation ring (usually $\mathbb{Z}_p$).

Modelled on the notion of a skew polynomial extension $A[x;\sigma,\delta]$, a \emph{skew power series} extension $A[[x;\sigma,\delta]]$ is a ring which is isomorphic to $A[[x]]$ as topological left (or right) $A$-modules, with multiplication given by
\begin{equation}\label{eqn: multiplication}
xa = \sigma(a)x + \delta(a)
\end{equation}
for all $a\in A$. Here, $\sigma$ is an automorphism of $A$, and $\delta$ is a \emph{$\sigma$-derivation} of $A$, i.e. a linear map $\delta: A\to A$ satisfying $\delta(ab) = \delta(a)b + \sigma(a)\delta(b)$. (We say that $(\sigma,\delta)$ is a \emph{skew derivation} for short.)

Unlike the case of skew polynomial extensions, it is \emph{not} the case that an arbitrary skew derivation $(\sigma,\delta)$ on $A$ gives rise to a well-defined ring $A[[x;\sigma,\delta]]$, due to possible convergence issues. For this reason, we work throughout the paper with a special class of skew power series extensions whose question of existence is already well understood. We denote this class of rings by $\SPS(A)$, and call them \emph{local skew power series extensions}: we require $A$ to have a unique maximal ideal $\mathfrak{m}$, with respect to which it is complete (and separated), and we stipulate that $\sigma(\mathfrak{m}) \subseteq \mathfrak{m}, \delta(R)\subseteq \mathfrak{m}$ and $\delta(\mathfrak{m})\subseteq \mathfrak{m}^2$. These are known to exist, and while other classes have been studied (see e.g. \cite{bergen-grzeszczuk-skew} and \cite[\S 5]{greenfeld-smoktunowicz-ziembowski}, dealing with the rather different case in which $\delta$ is assumed \emph{locally nilpotent}), the local case encompasses the case of interest in the Iwasawa context.

There is a growing literature on simple power series extensions $S = R[[x;\sigma,\delta]]$: see, for instance, \cite{schneider-venjakob-localisations} and \cite{letzter-noeth-skew} for important milestones in this theory. However, very little work has been done to treat the \emph{iterated} case so far. The most basic noncommutative case, that of \emph{$q$-commutative power series rings} (completed quantum planes), has been studied in \cite{letzter-wang-q-comm}. A few properties of general iterated local skew power series rings are established in \cite{wang-quantum} (though see the remarks before Theorem D below for a caveat). The author is not aware of any other developments in the theory.

As above, we restrict our work to the local case, and denote the class of $n$-fold iterated local skew power series extensions of $A$ by $\SPS^n(A)$ (Definition \ref{defn: SPS^n} below). Even under rather general conditions like this, we can understand much of the dimension theory of these rings: our first result, along these lines, complements the work of Wang \cite{wang-quantum}.

\textbf{Theorem A.} Let $k$ be a field, and $R\in\SPS^n(k)$. Then $\Kdim(R) = \gldim(R) = n$.

This result follows from Theorem \ref{thm: Kdim}, Theorem \ref{thm: clKdim} and Corollary \ref{cor: gldim}.

The process of \emph{iteration} is often rather badly behaved. Indeed, in the notation of (\ref{eqn: iteration procedure}): when $n \geq i \geq 2$, there is no guarantee that $(\sigma_i,\delta_i)$ restricts to a skew derivation of $R_j$ for any $0 \leq j \leq i-2$. Hence we identify a subclass of $\SPS^n(A)$, which we denote $\RSPS^n(A)$, and whose elements we call \emph{rigid}, whose elements are well-behaved in the above sense. See Definition \ref{defn: stability} for the precise definition.

This appears to be a fairly restrictive condition: we expect that $\RSPS^n(A)$ is in fact a much smaller class than $\SPS^n(A)$ except in trivial cases when $n$ and $A$ are very small. But, in fact many well-known rings \emph{do} lie in $\RSPS^n(A)$, including $q$-commutative skew power series rings (Example \ref{ex: q-commutatives}), Iwasawa algebras of supersoluble uniform groups (Examples \ref{ex: nilpotent IAs}--\ref{ex: rigid soluble IAs}), and various other completed quantum algebras (Examples \ref{ex: horton's algs}--\ref{ex: quantum matrices}).

We are interested in the prime ideal structure of iterated skew power series rings $R\in\SPS^n(A)$. As a first step towards a more general theory, we calculate the classical Krull dimension (prime length), as defined in \cite{MR}, of elements of $\RSPS^n(A)$, and show that they obtain the upper bound given by the Krull dimension, similarly to pure automorphic extensions, which are well studied and in some senses easier to understand. This perhaps indicates that, in understanding the theory of prime ideals of iterated non-pure-automorphic skew power series rings, a good first step will be to consider the \emph{rigid} ones.

\textbf{Theorem B.} Let $k$ be a division ring, and $R\in\SPS^n(k)$. If $R\in\RSPS^n(k)$, or $R$ has pure automorphic type, then $\clKdim(R) = n$.

The proof of Theorem B is contained within Theorem \ref{thm: clKdim} and Proposition \ref{propn: clKdim}.

So far we have not said much about the natural filtration on $R$, but this will be crucial for understanding its ideals in future work. If $R$ is an iterated local skew power series ring with unique maximal ideal $\mathfrak{m}$, we can consider $R$ as a complete $\mathfrak{m}$-adically filtered ring. We compute its associated graded ring as follows:

\textbf{Theorem C.} Let $(R, \mathfrak{m})$ be a complete local ring, $(\sigma,\delta)$ a local skew derivation on $R$, and $S = R[[x;\sigma,\delta]]$ a local skew power series extension of $R$ with unique maximal ideal $\mathfrak{n}$. Then there exists a skew derivation $(\overline{\sigma}, \overline{\delta})$ of $\gr_{\mathfrak{m}}(R)$ such that the inclusion of graded rings $\gr_{\mathfrak{m}}(R)\to \gr_{\mathfrak{n}}(S)$ extends to an isomorphism $(\gr_{\mathfrak{m}}(R))[X;\overline{\sigma},\overline{\delta}]\cong \gr_{\mathfrak{n}}(S)$ upon mapping $X$ to $\gr(x)$. Moreover, if $\delta(\mathfrak{m}) \subseteq \mathfrak{m}^3$, then $\overline{\delta} = 0$.

The proof is given in Lemma \ref{lem: isom of graded rings}.

We give Example \ref{ex: nonzero graded delta} to show that $\overline{\delta}$ can indeed be nonzero: this corrects a small error in \cite[Proposition 2.9]{venjakob}, \cite[2.3(iii)]{wang-quantum} and \cite[3.10(i)]{letzter-noeth-skew}, and a similar error can be found in \cite[Lemma 1.4(iv)]{schneider-venjakob-codim}. This phenomenon also arises in practice when studying the Iwasawa algebras of certain non-uniform 2-valuable groups. However, we can get rid of $\overline{\delta}$ by modifying the filtration, which we prove as Lemma \ref{lem: gr under new filtration}:

\textbf{Theorem D.} Let $R$ and $S$ be as in Theorem C. Then there exists a filtration $f$ of $S$, cofinal with the $\mathfrak{n}$-adic filtration, such that the inclusion of graded rings $\gr_f(R)\to \gr_f(S)$ extends to an isomorphism $(\gr_f(R))[X;\overline{\sigma}]\cong \gr_f(S)$ upon mapping $X$ to $\gr(x)$.

With this in mind, we note in \S \ref{subsection: lifting} that the results of \cite{wang-quantum} hold even in cases such as Example \ref{ex: nonzero graded delta}.

Finally, we note some basic structural results that seem to be absent from the literature.

\textbf{Theorem E.} Let $R$ and $S$ be as in Theorem $C$, and let $\mathfrak{n}$ be the maximal ideal of $S$. Let $f$ be either the $\mathfrak{n}$-adic filtration or the filtration of Theorem D. Then:
\begin{enumerate}[label=(\roman*)]
\item If $\gr_f(R)$ is prime, then $\gr_f(S)$ is prime, and hence $S$ is prime.
\item If $\gr_f(R)$ is a maximal order, then $\gr_f(S)$ is a maximal order, and hence $S$ is a maximal order.
\item The natural inclusion map $R\to S$ is faithfully flat.
\end{enumerate}

Part (i) is proved as Proposition \ref{propn: primality}, part (ii) as Lemma \ref{lem: max order} and part (iii) as Lemma \ref{lem: faithful flatness}.

\subsection*{Notation and conventions}

We often talk about topological rings: unless specified, the filtration on a local ring $R$ with unique maximal ideal $\mathfrak{m}$ is the $\mathfrak{m}$-adic filtration. All filtrations considered are assumed to be discrete, positive and separated. When we need to specify a filtration explicitly, we will usually denote it as a function $f: R\to \mathbb{N}\cup\{\infty\}$.

When the derivation $\delta$ is zero, we will write the skew polynomial ring $R[x;\sigma,\delta]$ (resp. the skew power series ring $R[[x;\sigma,\delta]]$) as $R[x;\sigma]$ (resp. $R[[x;\sigma]]$). Such rings are said to have \emph{(pure) automorphic type}.

\subsection*{Acknowledgements}

I am very grateful to K. A. Brown for some interesting discussions and his extensive comments on an early draft of this paper. I also gratefully acknowledge a helpful discussion with Adam Jones about soluble Iwasawa algebras; Example \ref{ex: rigid soluble IAs}(ii) and Non-example \ref{nonex: non-rigid soluble IAs} are in large part due to him.

\section{Preliminaries}

\subsection{Definitions and first results}

Write $A$ for the left $R$-module of left formal power series $$A = \left\{\sum_{i=0}^\infty r_i x^i \;\middle|\; r_i\in R\right\}$$ over $R$. Suppose the multiplication in $R$, together with the multiplication rule (\ref{eqn: multiplication}), extends to a well-defined \emph{right} $R$-module structure on $A$. Then there is a unique $R$-bimodule map $A\tensor{R} A\to A$ such that $r\otimes s\mapsto rs$ for all $r,s\in R$ and $x^i\otimes x^j\mapsto x^{i+j}$; and $A$ becomes a ring under this multiplication. In this case, we write $R[[x;\sigma,\delta]] := A$ for this ring. Note that $R[[x;\sigma,\delta]]$ contains $R$, and is a skew power series ring over $R$ as described above by definition.

When $A$ has a right $R$-module structure in this way, we will say as shorthand simply that the ring $R[[x;\sigma,\delta]]$ \emph{exists}, and otherwise that $R[[x;\sigma,\delta]]$ \emph{does not exist}.

The following computational lemma will be useful.

\begin{lem}\label{lem: multiplication by higher degrees}
Fix a positive integer $n$. Then, for $a\in R$, we have
$$x^n a = \sum_{m\in M_n} m(\sigma,\delta)(a) x^{e(m)}$$
inside the skew polynomial ring $R[x;\sigma,\delta]$ or (if it exists) the skew power series ring $R[[x;\sigma,\delta]]$. Here, $M_n$ is the set of formal (noncommutative) monomials $m = m(X,Y)$ of degree $n$ in the variables $X$ and $Y$, and $e(m)$ is the total degree of $X$ in the monomial $m$.
\end{lem}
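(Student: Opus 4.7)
The plan is to prove the identity by induction on $n$, using the multiplication rule (\ref{eqn: multiplication}) as the base case and exploiting the recursive structure of monomials of degree $n$ in two non-commuting variables for the inductive step. Throughout, one may work purely in the skew polynomial ring (no convergence issues arise), and then re-read the same identity in $R[[x;\sigma,\delta]]$ when it exists, since each term involves only the finite sum indexed by $M_n$.

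For the base case $n=1$, the set $M_1 = \{X,Y\}$ has two elements. Interpreting $X \mapsto \sigma$ and $Y \mapsto \delta$, the formula reads $xa = \sigma(a)x + \delta(a)$, which is precisely (\ref{eqn: multiplication}). For the inductive step, suppose the identity holds for $n$. Partition $M_{n+1}$ according to the leftmost letter of each monomial: every $m' \in M_{n+1}$ is uniquely of the form $Xm$ or $Ym$ for some $m \in M_n$, giving a disjoint decomposition $M_{n+1} = XM_n \sqcup YM_n$ with $e(Xm) = e(m)+1$ and $e(Ym) = e(m)$. Applying the inductive hypothesis and then (\ref{eqn: multiplication}) termwise yields
\begin{align*}
x^{n+1}a \;=\; x\cdot x^n a \;&=\; \sum_{m\in M_n} x\bigl(m(\sigma,\delta)(a)\bigr)\, x^{e(m)} \\
&=\; \sum_{m\in M_n} \sigma\bigl(m(\sigma,\delta)(a)\bigr)\, x^{e(m)+1} \;+\; \sum_{m\in M_n} \delta\bigl(m(\sigma,\delta)(a)\bigr)\, x^{e(m)}.
\end{align*}
Identifying $\sigma\bigl(m(\sigma,\delta)(a)\bigr) = (Xm)(\sigma,\delta)(a)$ and $\delta\bigl(m(\sigma,\delta)(a)\bigr) = (Ym)(\sigma,\delta)(a)$, and using the partition of $M_{n+1}$, the right-hand side becomes $\sum_{m'\in M_{n+1}} m'(\sigma,\delta)(a)\, x^{e(m')}$, completing the induction.

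There is essentially no obstacle: the only subtlety is the bookkeeping involved in reading off the correct bijection between $M_{n+1}$ and $\{X,Y\}\times M_n$, and confirming that the evaluation homomorphism $X \mapsto \sigma$, $Y \mapsto \delta$ (landing in the non-commutative monoid of additive endomorphisms of $R$) is compatible with left-concatenation of monomials. Both are immediate from the definitions. In the power series case one observes additionally that only finitely many monomials $m \in M_n$ contribute to any fixed power $x^j$ in the expansion (indeed, $|M_n|$ is finite), so the expression is a well-defined element of $R[[x;\sigma,\delta]]$ whenever the latter exists.
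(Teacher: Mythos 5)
Your proof is correct and follows essentially the same route as the paper: induction on $n$ with base case the multiplication rule (\ref{eqn: multiplication}), and an inductive step based on decomposing each monomial of degree $n+1$ by its leftmost letter as $Xm$ or $Ym$ with $m \in M_n$. You have simply written out in full the computation that the paper leaves as "an easy induction."
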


\begin{proof}
When $n=1$, this is just the multiplication rule (\ref{eqn: multiplication}) in $S$. For $n>1$, this follows by an easy induction: note that all elements of $M_n$ are either of the form $Xm(X,Y)$ or of the form $Ym(X,Y)$ for some $m\in M_{n-1}$.
\end{proof}

\subsection{Local skew power series extensions}

There are several variant notions of ``locality" in the literature; for the purposes of this paper, we fix the following definition.

\begin{defn}\label{defn: local ring}
Let $R$ be a ring. Then $R$ is \emph{local} if it has a unique maximal two-sided ideal $\mathfrak{m}$, and $R$ is \emph{scalar local} ring if additionally $\mathfrak{m}$ is maximal among \emph{right} (and hence also \emph{left}) ideals. (It is easy to see that scalar local implies local, but that this implication is not reversible.)

An alternative formulation of this definition, which is sometimes useful, goes as follows. Let $R$ be a ring with unique maximal ideal $\mathfrak{m}$. Then $R$ is \emph{local} (resp. \emph{scalar local}) if $R/\mathfrak{m}$ is simple artinian (resp. a division ring).

We will sometimes say that \emph{$(R,\mathfrak{m})$ is a local} (resp. \emph{scalar local}) \emph{ring}, as shorthand for ``$R$ is a local (resp. scalar local) ring with unique maximal ideal $\mathfrak{m}$".

The \emph{residue ring} of a local ring $(R,\mathfrak{m})$ is defined to be the quotient $R/\mathfrak{m}$.
\end{defn}

\begin{rk}
Let $R$ be a ring with unique maximal left ideal $\mathfrak{m}$. Then it is well known that $\mathfrak{m}$ is also the unique maximal \emph{right} ideal, and is hence equal to the Jacobson radical of $R$.
\end{rk}

The applicability of our results is guaranteed by the lemma below.

\begin{defn}
Let $(R,\mathfrak{m})$ be a complete local ring. We say that $(\sigma,\delta)$ is a \emph{skew derivation} of $(R, \mathfrak{m})$ (or just $R$) if $\sigma$ is an automorphism of $R$ and $\delta$ is a (left) $\sigma$-derivation of $R$. We say that the skew derivation $(\sigma, \delta)$ is \emph{local} if $\sigma(\mathfrak{m}) \subseteq \mathfrak{m}$, $\delta(R)\subseteq \mathfrak{m}$ and $\delta(\mathfrak{m})\subseteq \mathfrak{m}^2$.
\end{defn}

\begin{lem}
\cite[Lemma 2.1]{venjakob} Let $(R, \mathfrak{m})$ be a complete local ring, and $(\sigma,\delta)$ a local skew derivation of $R$. Then the ring $R[[x;\sigma,\delta]]$ exists.\qed
\end{lem}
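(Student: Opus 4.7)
The plan is to extend the multiplication rule~(\ref{eqn: multiplication}) from $R[x;\sigma,\delta]$ to all of $A$ by continuity in the $\mathfrak{m}$-adic topology, letting the locality conditions on $(\sigma,\delta)$ force convergence. The main obstacle is to verify that the coefficients of the putative product actually converge in $R$; once that is done, all ring-theoretic verifications reduce to identities on the dense subring $R[x;\sigma,\delta]$, which exists as an honest ring without any analytic input.

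The key preliminary fact is that $\delta(\mathfrak{m}^k)\subseteq \mathfrak{m}^{k+1}$ for every $k\geq 0$. I would prove this by induction on $k$: the cases $k=0,1$ are hypotheses, and the inductive step follows from the Leibniz rule $\delta(ab)=\delta(a)b+\sigma(a)\delta(b)$ combined with $\sigma(\mathfrak{m})\subseteq\mathfrak{m}$. Note in passing that since $\sigma$ is an automorphism, $\sigma^{-1}(\mathfrak{m})$ is again a maximal two-sided ideal, hence equal to $\mathfrak{m}$, so $\sigma$ actually preserves $\mathfrak{m}$-adic valuation exactly. Applied iteratively, this shows that for any monomial $m(X,Y)\in M_i$ with $e(m)=N$ and any $a\in R$, one has $m(\sigma,\delta)(a)\in\mathfrak{m}^{i-N}$, because $m$ contains $i-N$ occurrences of the letter $Y$, each of which raises the $\mathfrak{m}$-adic valuation by at least one, while the $X$-entries leave it unchanged.

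With that in hand, I would define the right $R$-action on $A$ by
\[\left(\sum_{i\geq 0} r_i x^i\right)\cdot a \;:=\; \sum_{N\geq 0}\left(\sum_{i\geq N} r_i \sum_{\substack{m\in M_i \\ e(m)=N}} m(\sigma,\delta)(a)\right)x^N,\]
which is the only definition compatible with continuity and Lemma~\ref{lem: multiplication by higher degrees}. For each fixed $N$, the inner sum converges in $R$, since its $i$-th term lies in $\mathfrak{m}^{i-N}$ and $R$ is $\mathfrak{m}$-adically complete. Additivity in $a$ and in $\sum r_i x^i$ is formal, and the compatibility identity $(f\cdot a)\cdot b = f\cdot(ab)$ would be checked first for $f=x^i$, where it is a known identity in $R[x;\sigma,\delta]$, and then extended to general $f\in A$ by continuity.

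Finally, I would produce the $R$-bimodule map $A\otimes_R A\to A$ characterised by $r\otimes s\mapsto rs$ and $x^i\otimes x^j\mapsto x^{i+j}$, using the right $R$-action just constructed; the resulting multiplication on $A$ agrees with the usual one on the polynomial subring. Associativity is then obtained by the same pattern: it holds on the dense subring $R[x;\sigma,\delta]\subseteq A$ by hypothesis, and extends by continuity, using the convergence estimate above to justify the swap of limits. The only step that genuinely uses completeness and locality is the convergence of the coefficients; everything else is a routine extension-by-continuity argument from the polynomial case.
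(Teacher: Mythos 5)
Your argument is correct, but note that the paper does not prove this lemma at all: it is quoted verbatim from Venjakob (\cite[Lemma 2.1]{venjakob}) with a \qed and no argument, precisely because the paper declares that it ``does not deal with the question of existence any further''. What you have written is essentially a reconstruction of the standard proof from that source. The two load-bearing points are both present and right: first, the inductive upgrade of $\delta(\mathfrak{m})\subseteq\mathfrak{m}^2$ to $\delta(\mathfrak{m}^k)\subseteq\mathfrak{m}^{k+1}$ (which the paper itself records as Remark \ref{rk: preserving local structure}), and second, the resulting estimate $m(\sigma,\delta)(a)\in\mathfrak{m}^{i-e(m)}$ for $m\in M_i$, which makes the coefficient of $x^N$ in your formula a convergent series in the complete ring $R$; the inner sum over $\{m\in M_i: e(m)=N\}$ is finite, so there is no further convergence issue there. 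The one point worth making explicit in a full write-up is which topology on $A$ underlies your ``extension by continuity'': the right one is the filtration $w\left(\sum r_ix^i\right)=\inf_i\{v(r_i)+i\}$, with respect to which $A$ is complete, the polynomials are dense, and your estimate shows exactly that right multiplication by $a\in R$ satisfies $w(f\cdot a)\geq w(f)$, hence is continuous; with that stated, the transfer of the identities $(f\cdot a)\cdot b=f\cdot(ab)$ and associativity from the dense polynomial subring is legitimate. Your parenthetical observation that $\sigma(\mathfrak{m})=\mathfrak{m}$ automatically (since $\sigma$ permutes the maximal two-sided ideals) is correct but not needed; only $\sigma(\mathfrak{m}^j)\subseteq\mathfrak{m}^j$ enters the estimate.
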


We do not deal with the question of existence any further in this paper; all rings written are implicitly assumed to exist.

\begin{defn}\label{defn: LSPSR over R}
Let $(R,\mathfrak{m})$ be a local ring. We will say that the ring $S$ is a \emph{local skew power series ring over $R$}, or a \emph{local skew power series extension of $R$}, if it satisfies the following properties:
\begin{enumerate}[noitemsep,label=(\roman*)]
\item $S$ is a skew power series ring over $R$ with respect to some automorphism $\sigma$ and some $\sigma$-derivation $\delta$, say $S = R[[x; \sigma, \delta]]$;
\item $\sigma(\mathfrak{m}) \subseteq \mathfrak{m}$;
\item $\delta(R) \subseteq \mathfrak{m}$ and $\delta(\mathfrak{m}) \subseteq \mathfrak{m}^2$.
\end{enumerate}
\end{defn}

\begin{rk}\label{rk: preserving local structure}
Condition (iii) implies that $\delta(\mathfrak{m}^n) \subseteq \mathfrak{m}^{n+1}$ for all $n\in\mathbb{N}$. Hence, taken together, conditions (ii) and (iii) are simply the natural requirement that the multiplication data $\sigma,\delta$ of $R$ should interact ``nicely" with the $\mathfrak{m}$-adic topology on $R$, i.e. $\sigma$ should be a homeomorphism, and $\delta$ should be topologically nilpotent.
\end{rk}

As the hypotheses above are cumbersome to repeat, we set up some shorthand notation that will remain in force for the rest of the paper.

\begin{defn}\label{defn: SPS}
Let $(R,\mathfrak{m})$ be a complete local ring. Then we will write $S\in \SPS(R)$ to mean that $S$ is a local skew power series ring $R[[x;\sigma,\delta]]$ over $R$. If $\sigma$ and $\delta$ are $A$-linear (for some subring $A\subseteq R$), we may write $S\in\SPS_A(R)$ to emphasise this fact. We may also choose to specify the maximal ideals of $R$ or $S$ explicitly, e.g. by writing $(S,\mathfrak{n})\in \SPS(R,\mathfrak{m})$ to mean that $\mathfrak{m}$ and $\mathfrak{n}$ are the maximal ideals of $R$ and $S$ respectively.
\end{defn}

We justify the name ``\emph{local} skew power series ring":

\begin{lem}\label{lem: SPS extensions are local}
Let $(R,\mathfrak{m})$ be a complete local ring, and let $S = R[[x;\sigma,\delta]]\in\SPS(R)$.
\begin{enumerate}[noitemsep,label=(\roman*)]
\item $\mathfrak{n} = \mathfrak{m} + xR$ is the unique maximal ideal of $S$, and the natural inclusion $R\subseteq S$ induces an isomorphism $S/\mathfrak{n} \cong R/\mathfrak{m}$ of residue rings.
\item $(S,\mathfrak{n})$ is local. Moreover, $(S,\mathfrak{n})$ is scalar local if and only if $(R,\mathfrak{m})$ is scalar local.
\item $S$ is $\mathfrak{n}$-adically complete.
\end{enumerate}
\end{lem}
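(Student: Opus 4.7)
The core of my plan is to identify $\mathfrak{n}$ as the kernel of the natural ``constant term'' map $\pi : S \to R/\mathfrak{m}$, defined by $\sum_{i \geq 0} r_i x^i \mapsto r_0 + \mathfrak{m}$. The main work for (i) is to check that $\pi$ is multiplicative. To do this, I will expand $(\sum a_i x^i)(\sum b_j x^j)$ and extract the $x^0$-coefficient: by Lemma \ref{lem: multiplication by higher degrees}, a product $a_i x^i \cdot b_j x^j$ contributes to the constant term only when $j = 0$, and then only the monomial $w = Y^i$ in $x^i b_0 = \sum_{w \in M_i} w(\sigma,\delta)(b_0) x^{e(w)}$ matters, giving $a_i \delta^i(b_0)$. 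Iterating the condition $\delta(\mathfrak{m}^j) \subseteq \mathfrak{m}^{j+1}$ from Remark \ref{rk: preserving local structure} yields $\delta^i(R) \subseteq \mathfrak{m}^i$, which both ensures convergence of $\sum_i a_i \delta^i(b_0)$ and causes all terms except $a_0 b_0$ to vanish modulo $\mathfrak{m}$; so $\pi$ is a ring homomorphism. Its kernel consists of the power series with constant term in $\mathfrak{m}$, and any such $f = r_0 + \sum_{i \geq 1} r_i x^i$ can be rewritten as $r_0 + \bigl(\sum_{j \geq 0} r_{j+1} x^j\bigr) x \in \mathfrak{m} + Sx$, identifying $\ker \pi$ with the two-sided ideal generated by $\mathfrak{m}$ and $x$. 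The first isomorphism theorem then delivers (i).

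For (ii), simplicity of the quotient $R/\mathfrak{m}$ immediately makes $\mathfrak{n}$ a maximal two-sided ideal. To show uniqueness I will take any ideal $I \not\subseteq \mathfrak{n}$ and argue $I = S$: the image $\pi(I)$ is a non-zero ideal of the simple ring $R/\mathfrak{m}$, so contains $1$, and hence some $f \in I$ has constant term $1 + m$ with $m \in \mathfrak{m}$. Since $\mathfrak{m} = J(R)$, $1 + m$ is a unit in $R$, so I can factor $f = (1+m)(1+h)$ with $h \in Sx \subseteq \mathfrak{n}$, reducing everything to inverting $1 + h$. I will produce this inverse as the geometric sum $\sum_{k \geq 0} (-h)^k$, with convergence guaranteed by the $F$-adic topology introduced in (iii) together with completeness of $S$ in that topology. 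The scalar-local equivalence is then immediate from $S/\mathfrak{n} \cong R/\mathfrak{m}$ and Definition \ref{defn: local ring}.

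For (iii), I will introduce the natural mixed filtration $F^k S := \{\sum r_i x^i \in S : r_i \in \mathfrak{m}^{\max(0, k-i)} \text{ for all } i\}$, in which $S$ is complete by construction, being topologically a power series ring over the complete base $R$. The plan is to prove $\mathfrak{n}^k = F^k S$ for all $k$. The inclusion $\mathfrak{n}^k \subseteq F^k S$ will follow by induction using the multiplicativity $F^a S \cdot F^b S \subseteq F^{a+b} S$, which is itself another application of Lemma \ref{lem: multiplication by higher degrees} combined with $\sigma(\mathfrak{m}^j) \subseteq \mathfrak{m}^j$ and $\delta(\mathfrak{m}^j) \subseteq \mathfrak{m}^{j+1}$. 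The reverse inclusion $F^k S \subseteq \mathfrak{n}^k$ is the main obstacle, since a priori $\mathfrak{n}^k$ consists of finite sums of products and so is not obviously closed under the infinite sums making up $F^k S$. My key trick will be to split $f = \sum_{i \geq 0} r_i x^i \in F^k S$ at the index $i = k$: for each $i < k$, decompose $r_i \in \mathfrak{m}^{k-i}$ as a finite sum of products of $(k-i)$ elements of $\mathfrak{m}$, so that $r_i x^i$ becomes a finite sum of products of $k$ factors from $\mathfrak{n}$; for the tail, bundle $\sum_{i \geq k} r_i x^i = \bigl(\sum_{j \geq 0} r_{k+j} x^j\bigr) x^k \in S \cdot x^k \subseteq \mathfrak{n}^k$, using that $x^k \in \mathfrak{n}^k$ and that $\mathfrak{n}^k$ is a right ideal. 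Combining, $f$ is a finite sum of elements of $\mathfrak{n}^k$. The $\mathfrak{n}$-adic and $F$-adic filtrations then coincide, and completeness of $S$ in the latter gives $\mathfrak{n}$-adic completeness.
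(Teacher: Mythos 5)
Your proof is correct, but it is a genuinely different (and more self-contained) route than the paper's: the paper simply cites Venjakob's \cite[\S 2]{venjakob} for parts (i) and (iii) and deduces (ii) from (i) together with the remark after Definition \ref{defn: local ring}, whereas you reconstruct the whole argument from first principles. Your three key devices --- the constant-term homomorphism $\pi: S\to R/\mathfrak{m}$ (whose multiplicativity reduces, via Lemma \ref{lem: multiplication by higher degrees} and $\delta^i(R)\subseteq\mathfrak{m}^i$, to the vanishing of $\sum_{i\geq 1}a_i\delta^i(b_0)$ modulo $\mathfrak{m}$), the unit trick $f=(1+m)(1+h)$ with $1+h$ inverted by a geometric series, and the identification $\mathfrak{n}^k=F^kS$ for the mixed filtration --- are exactly the standard ingredients, and the one genuinely delicate point, the inclusion $F^kS\subseteq\mathfrak{n}^k$ (which is not automatic because $\mathfrak{n}^k$ is a priori only a set of \emph{finite} sums of products), is handled correctly by your splitting of $f$ at index $k$ and absorption of the tail into $Sx^k$. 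Two cosmetic points: in that last step you want $\mathfrak{n}^k$ to be a \emph{left} ideal to absorb $S\cdot x^k$ (immaterial, since it is two-sided); and your appeal to $\mathfrak{m}=J(R)$ is really just the observation that $1+\mathfrak{m}$ consists of units because $R$ is $\mathfrak{m}$-adically complete. What your approach buys is a proof that does not depend on unwinding Venjakob's conventions and that makes explicit the filtration comparison $\mathfrak{n}^k=F^kS$, which is used implicitly elsewhere in the paper (e.g. in Remark \ref{rks: rank, gr, known properties}(iii)); what it costs is length, which is presumably why the paper delegates it.
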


\begin{proof}
(i) and (iii) follow from \cite[\S 2]{venjakob}. (ii) follows from (i) and the remark immediately after Definition \ref{defn: local ring}.
\end{proof}

\begin{defn}\label{defn: SPS^n}\label{defn: ILSPSR}
Let $(R,\mathfrak{m})$ be a complete local ring. Then, for each positive integer $n$, we will inductively define the notation $S\in \SPS^n(R)$ as follows: $S\in \SPS^1(R)$ if and only if $S\in \SPS(R)$; and, for $n>1$, we write $S\in \SPS^n(R)$ if and only if $S\in \SPS(T)$ for some $T\in \SPS^{n-1}(R)$.

It will be convenient to have several explicit ways of writing a ring $S\in \SPS^n(R)$. We will use either of the following equivalent forms interchangeably:
\begin{itemize}
\item $S = R[[x_1; \sigma_1,\delta_1]]\dots[[x_n;\sigma_n,\delta_n]]$,
\item $S = R[[\bm{x;\sigma,\delta}]]$ when the number $n$ is understood; and, in this case, $\bm{x} = (x_1, \dots, x_n), \bm{\sigma} = (\sigma_1, \dots, \sigma_n),$ and $\bm{\delta} = (\delta_1,\dots,\delta_n)$.
\end{itemize}

As before, we may write $S\in\SPS_A^n(R)$ if all the $\sigma_i$ and $\delta_i$ are $A$-linear.

In this way, the expression $\SPS_A^n(R)$ may be read as ``the class of $n$-fold iterated $A$-linear local skew power series extensions of $R$".

If $S\in \SPS^n(R)$, the number $n$ is called the \emph{rank} of $S$ (over $R$). (Later, we will see that this number can often be recovered from the global dimensions or Krull dimensions of $R$ and $S$.)
\end{defn}

\begin{lem}\label{lem: faithful flatness}
$S$ is a faithfully flat $R$-module.
\end{lem}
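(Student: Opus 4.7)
The plan is to induct on $n$ and invoke transitivity of faithful flatness: if $T\in\SPS^{n-1}(R)$ is faithfully flat over $R$ by the inductive hypothesis, and $S\in\SPS(T)$ is faithfully flat over $T$ by the base case, then $S$ is faithfully flat over $R$. It therefore suffices to prove the statement for the single-step case $S=R[[x;\sigma,\delta]]\in\SPS(R)$, where the main work lies.

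For flatness, I would filter $R$ by powers of $\mathfrak{m}$ and $S$ by powers of $\mathfrak{n}$; both filtrations are separated and complete, by hypothesis on $R$ and by Lemma~\ref{lem: SPS extensions are local}(iii). The local conditions on $(\sigma,\delta)$ (in particular $\delta(\mathfrak{m}^k)\subseteq\mathfrak{m}^{k+1}$, cf.\ Remark~\ref{rk: preserving local structure}) imply $\mathfrak{n}^k\cap R=\mathfrak{m}^k$, so that $R\hookrightarrow S$ is a strict filtered embedding. Using the unique power-series representation of elements of $S$ together with this strict-filtration property, one checks that $\gr_{\mathfrak{n}}(S)$ is \emph{free} (and in particular flat) as a left $\gr_{\mathfrak{m}}(R)$-module on the symbols $\{\gr(x)^i\}_{i\ge 0}$. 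The standard graded-to-filtered lifting principle for complete (Zariskian) filtered rings then transfers flatness from $\gr(S)/\gr(R)$ to $S/R$. This is the main obstacle: one must verify the strict-filtration claim carefully, and invoke the correct form of the lifting theorem for noncommutative complete filtered rings, avoiding any circular dependence on the deeper ring-theoretic description of $\gr_{\mathfrak{n}}(S)$ given later as Theorem~C.

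For faithfulness, it suffices to show that $JS\neq S$ for every maximal right ideal $J$ of $R$, with a symmetric argument on the other side. Since $R$ is local with Jacobson radical $\mathfrak{m}$, every such $J$ contains $\mathfrak{m}$, so $J/\mathfrak{m}$ is a proper right ideal of $R/\mathfrak{m}$. Under the isomorphism $R/\mathfrak{m}\cong S/\mathfrak{n}$ from Lemma~\ref{lem: SPS extensions are local}(i), the image of $JS$ in $S/\mathfrak{n}$ is the right ideal of $R/\mathfrak{m}$ generated by $J/\mathfrak{m}$, which remains proper; hence $JS+\mathfrak{n}\neq S$, and in particular $JS\neq S$. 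Combined with flatness, this gives faithful flatness on both sides.
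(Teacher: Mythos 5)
Your reduction to the single-step case via transitivity, and your faithfulness argument, are both fine; the latter is essentially the paper's own argument (the paper notes $IS\subseteq\mathfrak{n}\neq S$ for a proper right ideal $I$, using $S/\mathfrak{n}\cong R/\mathfrak{m}$ from Lemma \ref{lem: SPS extensions are local}(i), exactly as you do). The issue is the flatness half, where you take a genuinely different route from the paper and leave the decisive step as an IOU. The "standard graded-to-filtered lifting principle" you invoke is standard only for modules with \emph{good} (finitely generated) filtrations, and $S$ is very far from finitely generated over $R$. Worse, the evidence in this very example is against a naive lifting statement: $\gr_{\mathfrak{n}}(S)$ is indeed \emph{free} over $\gr_{\mathfrak{m}}(R)$ on $\{\gr(x)^i\}_{i\geq 0}$ (a direct \emph{sum}), yet $S\cong\prod_{i\geq 0}Rx^i$ as a left $R$-module is a direct \emph{product}, which in general is neither free nor projective (already $\prod_{\mathbb{N}}\mathbb{Z}_p$ is not a free $\mathbb{Z}_p$-module). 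So freeness does not lift from $\gr$ to the filtered level here, and any flatness-lifting theorem you cite must be one that is valid for complete, non-finitely-generated filtered modules; you have not identified such a result, and until you do the argument has a genuine hole rather than a routine verification.

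The paper avoids all of this: it observes directly that $S\cong\prod_{i=0}^\infty Rx^i$ as a left $R$-module, a direct product of copies of the flat module $R$, and such a product is flat because $R$ is noetherian (Chase's theorem on direct products of flat modules). That one-line argument is both more elementary and more robust than the filtration machinery, and it is exactly the structure ($S$ is topologically free, not free) that your proposed lifting step stumbles over. I would either adopt the direct-product argument, or, if you want to keep the graded approach, replace the appeal to a "standard lifting principle" with a precise statement and proof of a flatness-lifting result for complete filtered modules that does not assume finite generation.
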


\begin{proof}
As a left $R$-module, we have $S \cong \prod_{i=0}^\infty Rx^i$, a direct product of flat $R$-modules, which is flat by \cite[Theorem 2.1]{chase}, as $R$ is noetherian. We may now conclude that $S$ is \emph{faithfully} flat as a left $R$-module using \cite[Proposition 7.2.3]{MR}: given any proper right ideal $I$ of $R$, we have $IS \subseteq \mathfrak{m}S \subseteq \mathfrak{n} \neq S$. (The same holds for $S$ as a right module by a symmetric argument.)
\end{proof}

\section{Filtered rings}

\subsection{Constructing skew derivations}

In the rare case when we wish to \emph{construct} a local skew derivation on a ring $R$, the following lemma gives us a method of doing so.

Let $k$ be a fixed local ring, and $R\in \SPS^n_k(k)$, say $R = k[[\bm{x;\sigma,\delta}]]$ with maximal ideal $\mathfrak{m}$. Let $\tau$ be a fixed automorphism of $R$ (necessarily preserving $\mathfrak{m}$).

\begin{lem}\label{lem: constructing skew ders}
Given any choice of $b_1, \dots, b_n\in\mathfrak{m}^2$, the assignment $d(x_i) = b_i$ for all $1\leq i\leq n$ extends to a unique local $k$-linear $\tau$-derivation $d$ of $R$.
\end{lem}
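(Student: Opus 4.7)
The plan is to construct $d$ on the dense subspace of $R$ spanned by monomials $x^\alpha := x_1^{\alpha_1}\cdots x_n^{\alpha_n}$ via the Leibniz rule, and then extend by $\mathfrak{m}$-adic continuity; uniqueness will follow from the same density argument.

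For uniqueness, any local $k$-linear $\tau$-derivation $d$ is automatically $\mathfrak{m}$-adically continuous, since the local hypotheses force $d(\mathfrak{m}^j) \subseteq \mathfrak{m}^{j+1}$ for every $j$ by the standard Leibniz computation (cf.\ Remark \ref{rk: preserving local structure}). The $k$-span of the monomials $x^\alpha$ is dense in $R$ by the normal form for $\SPS_k^n(k)$-rings, and on this span the values of $d$ are determined by $k$-linearity, the assignment $d(x_i) = b_i$, and iterated application of Leibniz; continuity then pins $d$ down everywhere.

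For existence, I would define $d(x^\alpha)$ recursively in $|\alpha|$ by splitting off the leftmost factor:
\[
d(x_{i_1}\cdots x_{i_m}) := b_{i_1}\, x_{i_2}\cdots x_{i_m} + \tau(x_{i_1})\, d(x_{i_2}\cdots x_{i_m}),
\]
starting from $d(1) := 0$. An easy induction on $|\alpha|$ yields $d(x^\alpha) \in \mathfrak{m}^{|\alpha|+1}$, using $b_i \in \mathfrak{m}^2$ and $\tau(\mathfrak{m})\subseteq\mathfrak{m}$. I would then extend to all of $R$ by $d\bigl(\sum_\alpha c_\alpha x^\alpha\bigr) := \sum_\alpha c_\alpha\, d(x^\alpha)$: the sum converges $\mathfrak{m}$-adically and is well-defined by uniqueness of normal-form expressions, while $k$-linearity and the local property $d(\mathfrak{m}^j)\subseteq\mathfrak{m}^{j+1}$ are immediate from the construction.

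The substantive task is then verifying the Leibniz identity $d(fg) = d(f)g + \tau(f)d(g)$ on all of $R$. Both sides are continuous and $k$-bilinear in $(f,g)$, so it suffices to check the identity on pairs of monomials $x^\alpha, x^\beta$; a short induction on $|\alpha|$ using the defining recursion, together with continuity in $g$, reduces everything to the family of identities $d(x_i\, x^\beta) = b_i\, x^\beta + \tau(x_i)\, d(x^\beta)$ for all $i$ and all $\beta$. When $i$ is no greater than the smallest index $j$ with $\beta_j > 0$, the product $x_i x^\beta$ is already in normal form and the identity is built into the definition of $d$. The remaining case is the main obstacle: one must rewrite $x_i x^\beta$ in normal form by iteratively applying the skew relations $x_j x_l = \sigma_j(x_l)x_j + \delta_j(x_l)$ for $l < j$, whose outputs $\sigma_j(x_l), \delta_j(x_l) \in R_{j-1}$ are themselves arbitrary power series in lower-indexed generators. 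I plan to handle this by an induction on the number of transpositions needed to reach normal form, combined with $\mathfrak{m}$-adic continuity to compare both sides modulo arbitrarily high powers of $\mathfrak{m}$ (where only finitely many swaps are involved). The combinatorial bookkeeping in this final step, rather than any conceptual subtlety, is what I expect to absorb most of the work.
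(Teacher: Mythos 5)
Your construction and uniqueness argument are essentially the paper's: the paper likewise defines $d$ on ordered monomials by splitting off the leftmost generator, $d(x_j m') = d(x_j)m' + \tau(x_j)d(m')$, extends $k$-linearly and $\mathfrak{m}$-adically continuously, and obtains uniqueness from density. The divergence lies in what gets verified afterwards. The paper's proof checks only that the recursive value $d(m)$ of an \emph{ordered} monomial is independent of the split point, i.e.\ that $d(p_r)q_r+\tau(p_r)d(q_r)$ agrees for every factorisation $m=p_rq_r$ into ordered subwords, and stops there; you correctly observe that the full Leibniz identity on $R$ also requires compatibility with the reordering relations $x_jx_l=\sigma_j(x_l)x_j+\delta_j(x_l)$ for $l<j$, since $x^\alpha x^\beta$ is in general not in normal form. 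So your proposal flags a verification that the paper's own proof does not carry out. However, you leave exactly that step as a plan, and the plan as stated has a defect: ``the number of transpositions needed to reach normal form'' is not a well-founded induction variable, because a single application of the relation replaces the length-two word $x_jx_l$ by $\sigma_j(x_l)x_j+\delta_j(x_l)$ with $\sigma_j(x_l)\in\mathfrak{m}$ and $\delta_j(x_l)\in\mathfrak{m}^2$ arbitrary power series, i.e.\ by infinitely many new words, each of which may itself require further reordering. The repair is to fix $N$, work in $R/\mathfrak{m}^N$ (where only words of degree less than $N$ survive, by the degree estimates you already have), and induct on a genuinely well-founded order on such words --- total degree first, then within a fixed degree a statistic such as the number of inversions in the leading terms --- noting that each rewrite either strictly raises degree or strictly decreases that statistic. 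With that adjustment your outline closes; as written, the decisive step is a sketch rather than a proof.
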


\begin{proof}
First, as the elements $x_1, \dots, x_n$ are $k$-linearly independent, the mapping $x_i\mapsto b_i$ (for all $1\leq i\leq n$) extends uniquely to a $k$-linear map $d: kx_1 \oplus \dots \oplus kx_n\to \mathfrak{m}^2$.

To extend $d$ to a $k$-module homomorphism $R\to R$, it will suffice to define $d(m)$ for each ordered monomial $m$ in the elements $x_1, \dots, x_n$. We do this inductively on the degree of the monomial as follows. If the monomial $m$ is of degree $1$, then $m\in \{x_1, \dots, x_n\}$, and the value of $d(m)$ is already known. Proceeding inductively, let $n>1$: if $m$ is a monomial of degree $n$, then it may be written uniquely as $x_j m'$, where $m'$ is a monomial of degree $n-1$ in $x_j, x_{j+1}, \dots, x_n$. Then we will define $d(m) := d(x_j) m' + \tau(x_j) d(m')$.

To show that this is indeed a $\tau$-derivation, we must show that the value of $d(m)$ is well-defined regardless of how it is computed. More precisely: let $m$ be an ordered monomial of total degree $s$,
$$m = x_{j_1} x_{j_2} \dots x_{j_s},$$
where $j_1 \leq j_2 \leq \dots \leq j_s$. Then, for each $1\leq r < s$, the monomial $m$ can be written as the product $m = p_r q_r$, where $p_r = x_{j_1} x_{j_2} \dots x_{j_r}$ and $q_r = x_{j_{r+1}} x_{j_{r+2}} \dots x_{j_s}$. Then we may define:
$$d_r(m) = d(p_r)q_r + \tau(p_r)d(q_r).$$
Note that $d(m)$ was defined to be $d_1(m)$. In this notation, we must show that $d_1(m) = \dots = d_{s-1}(m)$.

We will do this by induction on $s$. There is nothing to check when $s=2$. Now suppose that, for some $N$, we have established $d_1(m) = \dots = d_{t-1}(m)$ for all monomials $m$ of total degree $2\leq t\leq N$. Take a monomial $m$ of total degree $N+1$: in the above notation, we will write it as $m = x_{j_1} x_{j_2} \dots x_{j_{N+1}} = p_r q_r$ for each $1\leq r\leq N$.

Fix $1\leq r\leq N-1$. Then $m = p_r q_r = p_{r+1} q_{r+1}$, where $q_r = x_{r+1} q_{r+1}$ and $p_{r+1} = p_r x_{r+1}$, i.e.
$$m = p_r \underbrace{x_{r+1} q_{r+1}}_{q_r} = \underbrace{p_r x_{r+1}}_{p_{r+1}} q_{r+1}.$$
So, exactly as above, we may calculate
\begin{align*}
d_r(m) &= d(p_r) q_r + \tau(p_r) d(q_r)\\
&= d(p_r) x_{r+1}q_{r+1} + \tau(p_r) d(x_{r+1}q_{r+1})\\
&= d(p_r) x_{r+1}q_{r+1} + \tau(p_r) [d(x_{r+1})q_{r+1} + \tau(x_{r+1})d(q_{r+1})]\\
&= [d(p_r) x_{r+1} + \tau(p_r) d(x_{r+1})]q_{r+1} + \tau(p_r x_{r+1})d(q_{r+1})\\
&= d(p_r x_{r+1})q_{r+1} + \tau(p_r x_{r+1})d(q_{r+1})\\
&= d(p_{r+1})q_{r+1} + \tau(p_{r+1})d(q_{r+1})\\
&= d_{r+1}(m).
\end{align*}
Hence these $d_i$ are all equal, and in particular are equal to $d$. This shows that $d$ is a $\tau$-derivation as required.
\end{proof}

\subsection{Filtrations and their associated graded rings}

Let $k$ be a complete local ring and $R\in\SPS^n(k)$. Then $R$ is also a complete local ring, say with maximal ideal $\mathfrak{m}$. Studying the $\mathfrak{m}$-adic filtration and its associated graded ring is the key to understanding many basic properties of $R$.

\begin{rks}\label{rks: rank, gr, known properties}
$ $

\begin{enumerate}[noitemsep,label=(\roman*)]
\item When $k$ is a field, it is easy to see that the rank $n$ of $(R,\mathfrak{m})\in\SPS^n(k)$ over a field is uniquely defined, and can be recovered as $n = \dim_k(\mathfrak{m}/\mathfrak{m}^2)$.
\item Suppose $A$ is a central subring of $R$ and $S\in\SPS^n(R)$. Then $S\in\SPS_A^n(R)$ if and only if $A$ is a central subring of $S$ (under the natural inclusion $A\subseteq R\subseteq S$).
\item Let $(R,\mathfrak{m})$ be a complete local ring, and $(S,\mathfrak{n})\in\SPS^n(R)$. Then the restriction of the $\mathfrak{n}$-adic valuation to $R$ is the $\mathfrak{m}$-adic valuation, i.e. $R\cap \mathfrak{n}^i = \mathfrak{m}^i$. In particular, $\mathfrak{m}$ is generated in \emph{$\mathfrak{n}$-adic degree} 1.
\item Let $(R,\mathfrak{m})$ be a complete local ring, and $(S,\mathfrak{n})\in\SPS^n(R,\mathfrak{m})$. We will always write $\gr(R)$ for the graded ring associated to the $\mathfrak{m}$-adic filtration, and likewise $\gr(S)$ for the graded ring associated to the $\mathfrak{n}$-adic filtration.
\end{enumerate}
\end{rks}

We now calculate this associated graded ring inductively.

Let $R$ be a complete local ring with maximal ideal $\mathfrak{m}$, take a local skew derivation $(\sigma,\delta)$ of $R$, and form $S = R[[x;\sigma,\delta]]\in\SPS(R)$ with maximal ideal $\mathfrak{n}$. Note that $\sigma$ and $\delta$ induce linear endomorphisms of the graded ring $\gr(R)$, of degrees 0 and 1 respectively, as follows: for all $x\in \mathfrak{m}^\lambda \setminus \mathfrak{m}^{\lambda+1}$, where $\lambda\in\mathbb{N}$, we have

\begin{itemize}[noitemsep]
\item $\overline{\sigma}(x + \mathfrak{m}^{\lambda+1}) = \sigma(x) + \mathfrak{m}^{\lambda+1}$,
\item $\overline{\delta}(x + \mathfrak{m}^{\lambda+1}) = \delta(x) + \mathfrak{m}^{\lambda+2}$.
\end{itemize}

It is easy to check that $\overline{\sigma}$ is in fact a graded automorphism.

\begin{lem}\label{lem: isom of graded rings}
The unique homomorphism of graded rings $(\gr(R))[z; \overline{\sigma}, \overline{\delta}]\to \gr(S)$, extending the natural inclusion map $\gr(R)\to \gr(S)$ and sending $z$ to $x+\mathfrak{m}^2$, is an isomorphism of graded rings.
\end{lem}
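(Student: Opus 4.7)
My plan is first to verify that the stated map is a well-defined graded ring homomorphism, and then to establish an explicit coefficient-wise description of $\mathfrak{n}^k$ from which bijectivity in each degree will be immediate.

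For well-definedness, I would first check that $\overline{\sigma}$ and $\overline{\delta}$ are well-defined graded endomorphisms of $\gr(R)$ of degrees $0$ and $1$ respectively. For $\overline{\delta}$, one needs $\delta(\mathfrak{m}^\lambda)\subseteq\mathfrak{m}^{\lambda+1}$ for all $\lambda\geq 0$, which follows by induction on $\lambda$ from $\delta(R)\subseteq\mathfrak{m}$, $\delta(\mathfrak{m})\subseteq\mathfrak{m}^2$, $\sigma(\mathfrak{m})\subseteq\mathfrak{m}$, and the Leibniz-like identity $\delta(ab)=\delta(a)b+\sigma(a)\delta(b)$. The skew-polynomial defining relation $\bar{x}\,\bar{a}=\overline{\sigma}(\bar{a})\bar{x}+\overline{\delta}(\bar{a})$ in $\gr(S)$ (for $\bar{a}\in\gr(R)_\lambda$, $a\in\mathfrak{m}^\lambda$) is obtained by taking the symbol of degree $\lambda+1$ of the identity $xa=\sigma(a)x+\delta(a)$ in $S$, using that $\sigma(a)x\in\mathfrak{n}^{\lambda+1}$ and $\delta(a)\in\mathfrak{m}^{\lambda+1}\subseteq\mathfrak{n}^{\lambda+1}$ (via Remark \ref{rks: rank, gr, known properties}(iii)). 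The universal property of the skew-polynomial ring then gives the desired degree-preserving ring homomorphism.

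For bijectivity, the key reduction is to prove the following explicit description of the $\mathfrak{n}$-adic filtration: writing each $f\in S$ uniquely as a left-formal power series $f=\sum_{j\geq 0} a_j x^j$ with $a_j\in R$, and setting
$$F^k S := \left\{\,\textstyle\sum_{j\geq 0} a_j x^j \in S \;\middle|\; a_j\in\mathfrak{m}^{\max(0,k-j)} \text{ for all } j\,\right\},$$
I would show $F^k S = \mathfrak{n}^k$. Once this is in hand, the graded piece $\gr(S)_k = F^k S/F^{k+1}S$ decomposes cleanly as $\bigoplus_{j=0}^k (\mathfrak{m}^{k-j}/\mathfrak{m}^{k-j+1})\cdot\bar{x}^j$, which is exactly the $k$th homogeneous piece of $\gr(R)[z;\overline{\sigma},\overline{\delta}]$ as a left $\gr(R)$-module. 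Hence the map is bijective in each degree and therefore an isomorphism of graded rings.

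The equality $F^k S = \mathfrak{n}^k$ itself splits in two. The inclusion $F^k S\subseteq\mathfrak{n}^k$ is straightforward: each term $a_j x^j$ with $a_j\in\mathfrak{m}^{k-j}$ sits in $\mathfrak{n}^{k-j}\cdot\mathfrak{n}^j\subseteq\mathfrak{n}^k$, the series converges $\mathfrak{n}$-adically (Lemma \ref{lem: SPS extensions are local}(iii)), and $\mathfrak{n}^k$ is $\mathfrak{n}$-adically closed since $\bigcap_n(\mathfrak{n}^k+\mathfrak{n}^n)=\mathfrak{n}^k$. For the reverse inclusion, the essential step is to show that $F^\bullet S$ is a multiplicative filtration: $F^i S\cdot F^j S\subseteq F^{i+j} S$. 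This is the main obstacle and is the only substantial computation; it is precisely where Lemma \ref{lem: multiplication by higher degrees} is used. Expanding a product $a_p x^p \cdot b_q x^q$ via $x^p b_q = \sum_{m\in M_p} m(\sigma,\delta)(b_q)\, x^{e(m)}$, each monomial $m$ substitutes some number $s$ of $\delta$'s for $\sigma$'s: this removes $s$ powers of $x$ but raises the $\mathfrak{m}$-degree of the coefficient by at least $s$ (since $\delta$ raises $\mathfrak{m}$-degree by $\geq 1$ at each step, while $\sigma$ preserves it), so that the total $F$-weight $i+j$ is conserved. Combined with $\mathfrak{n}=\mathfrak{m}+xR\subseteq F^1 S$ (Lemma \ref{lem: SPS extensions are local}(i)), a straightforward induction on $k$ gives $\mathfrak{n}^k\subseteq F^k S$, completing the argument.
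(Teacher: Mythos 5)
Your proposal is correct, and it is essentially the argument the paper has in mind: the paper's own ``proof'' of this lemma is literally the single word ``Clear.'', so you are supplying the standard details it suppresses (well-definedness of $\overline{\sigma},\overline{\delta}$ from $\delta(\mathfrak{m}^\lambda)\subseteq\mathfrak{m}^{\lambda+1}$, the relation $\bar{x}\bar{a}=\overline{\sigma}(\bar{a})\bar{x}+\overline{\delta}(\bar{a})$ by passing to symbols, and the coefficient-wise identification $\mathfrak{n}^k=F^kS$ via Lemma \ref{lem: multiplication by higher degrees}, with $R\cap\mathfrak{n}^i=\mathfrak{m}^i$ from Remark \ref{rks: rank, gr, known properties}(iii) making $\gr(R)\to\gr(S)$ injective). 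The only points worth writing out fully in a final version are that $\overline{\delta}$ is genuinely a $\overline{\sigma}$-derivation (so that $(\gr R)[z;\overline{\sigma},\overline{\delta}]$ exists) and the convergence bookkeeping when verifying $F^iS\cdot F^jS\subseteq F^{i+j}S$ for infinite series rather than single terms; both follow exactly as you indicate.
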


\begin{proof}
Clear.
\end{proof}

\begin{rk}
It is easy to see that $\overline{\delta} = 0$ if
\begin{equation}\label{eqn: gr delta = 0}
\delta(v(x)) > v(x) + 1
\end{equation}
for all nonzero $x\in R$, where $v$ is the $\mathfrak{m}$-adic filtration: $v(x) = \lambda$ if $x\in\mathfrak{m}^{\lambda} \setminus \mathfrak{m}^{\lambda+1}$. We give an example for which $\overline\delta \neq 0$.
\end{rk}

\begin{ex}\label{ex: nonzero graded delta}
Take $R = k[[x]]$ with skew derivation $\sigma = \mathrm{id}$ and $\delta(x) = x^2$: this is indeed a skew derivation by e.g. \ref{lem: constructing skew ders}. Form $S = R[[y;\mathrm{id}, \delta]]$. It is easy to see that the graded ring has nonzero $\overline\delta$. Set $X = x+\mathfrak{n}^2$ and $Y = y+\mathfrak{n}^2$ inside $\gr(S)$: then the multiplication in $\gr(S)$ is determined by the rule
$$YX = XY + X^2,$$
i.e. $\gr(S) \cong k[X][Y; \mathrm{id}, \overline{\delta}]$, where $\overline{\delta}(X) = X^2$.
\end{ex}

Retain the above notation, and continue to write $v$ for the $\mathfrak{m}$-adic filtration on $R$.

\begin{lem}\label{lem: gr under new filtration}
The function $f: S\to \mathbb{R}\cup\{\infty\}$ defined by
$$f\left( \sum_{i=0}^\infty r_i x^i\right) = \inf\{ 2v(r_i) + i\}$$
is a Zariskian ring filtration on $S$, and the associated graded ring is
$$\gr_f(S) \cong (\gr_f(R))[z;\overline{\sigma}].$$
\end{lem}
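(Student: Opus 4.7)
The plan is to (i) verify $f$ is a ring filtration via the multiplication formula of Lemma \ref{lem: multiplication by higher degrees}; (ii) identify $\gr_f(S)$, exploiting that $\delta$-terms acquire strictly larger $f$-weight than $\sigma$-terms; and (iii) obtain the Zariskian property from cofinality with the $\mathfrak{n}$-adic filtration.

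For the filtration axioms, only multiplicativity $f(ab) \geq f(a) + f(b)$ requires work. By expanding $a = \sum r_i x^i$ and $b = \sum s_j x^j$ and using additivity, it suffices to estimate $f(x^n \alpha)$ for $\alpha \in R$ with $v(\alpha) = \lambda$. By Lemma \ref{lem: multiplication by higher degrees}, $x^n \alpha = \sum_{m \in M_n} m(\sigma,\delta)(\alpha) x^{e(m)}$. Since $\sigma$ preserves the $\mathfrak{m}$-adic valuation while each application of $\delta$ increases it by at least one (Remark \ref{rk: preserving local structure}), a monomial $m$ of total degree $n$ and $X$-degree $e(m)$ maps $\alpha$ into $\mathfrak{m}^{\lambda + n - e(m)}$. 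Each resulting summand therefore has $f$-value at least $2(\lambda + n - e(m)) + e(m) = 2\lambda + 2n - e(m) \geq 2\lambda + n = f(\alpha) + f(x^n)$, as needed.

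Next I would define a graded ring homomorphism $\gr_f(R)[z;\overline{\sigma}] \to \gr_f(S)$ sending $z$ to the principal symbol of $x$ and extending the natural inclusion of $\gr_f(R)$. The relation $x\alpha = \sigma(\alpha)x + \delta(\alpha)$ has $f(\sigma(\alpha)x) = 2\lambda + 1$ but $f(\delta(\alpha)) \geq 2\lambda + 2$, so the principal symbol of $x\alpha$ in degree $2\lambda+1$ coincides with $\overline{\sigma}(\gr(\alpha))\gr(x)$, which is the defining relation of the target. Comparing graded pieces in each degree $d$ (both sides decompose as $\bigoplus_i (\gr_f(R))_{d-i} \cdot z^i$) shows the map is an isomorphism.

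Finally, for the Zariskian property, I would compare $f$ with the $\mathfrak{n}$-adic filtration $w$. Using Lemma \ref{lem: isom of graded rings}, one sees that $w\bigl(\sum r_i x^i\bigr) = \inf_i(v(r_i) + i)$, which together with nonnegativity of $v$ gives $w(s) \leq f(s) \leq 2w(s)$ for all $s \in S$. Hence $f$ and $w$ are cofinal, so $(S, f)$ is complete and separated. Since $\gr_{\mathfrak{m}}(R)$ is noetherian (inductively, by iterating Lemma \ref{lem: isom of graded rings}), $\gr_f(R)$ is too, and so is the skew polynomial ring $\gr_f(S) \cong \gr_f(R)[z;\overline{\sigma}]$. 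Combined with $F^1 S \subseteq \mathfrak{n} = J(S)$, a standard criterion then delivers the Zariskian property. The main delicate point is tracking $\delta$-contributions in the multiplicativity estimate — the factor of $2$ in the definition of $f$ is engineered precisely to ensure that $\overline{\delta}$ vanishes in $\gr_f$.
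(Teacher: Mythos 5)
Your proof is correct and follows essentially the same route as the paper: the paper's entire argument is the observation, via Lemma \ref{lem: multiplication by higher degrees}, that $f(x^i s_j) = f(\sigma^i(s_j)x^i)$ while $f(x^i s_j - \sigma^i(s_j)x^i) > f(x^i s_j)$, which is exactly your weight count $2(\lambda+n-e(m))+e(m) \geq 2\lambda+n$ with equality only for the pure-$\sigma$ monomial. You additionally spell out the Zariskian verification (cofinality with the $\mathfrak{n}$-adic filtration, completeness, noetherianity of the associated graded ring), which the paper dismisses with ``the claims now follow easily''; that extra care is welcome, the only caveat being that noetherianity of $\gr_f(R)$ needs the standing hypothesis that $R$ is an iterated extension of a base with noetherian graded ring, an assumption the paper itself leaves implicit.
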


\begin{proof}
Write $r = \sum_{i=0}^\infty r_ix^i$ and $s = \sum_{j=0}^\infty s_jx^j$, for $r_i, s_j\in R$. An application of Lemma \ref{lem: multiplication by higher degrees} shows that $f(x^i s_j) = f(\sigma^i(s_j) x^i)$ and that $f(x^i s_j - \sigma^i(s_j) x^i) > f(x^i s_j)$. The claims of the lemma now follow easily.
\end{proof}

\subsection{Lifting properties from the graded ring}\label{subsection: lifting}

Set $f_1$ to be the $\mathfrak{n}$-adic filtration on $S$, and $f_2$ to be the filtration obtained in Lemma \ref{lem: gr under new filtration}.

A few preliminary results are given for the rings $S\in\SPS^n(R)$, for $R$ a complete local ring, in \cite[Corollary 2.9]{wang-quantum}:
\begin{itemize}[noitemsep]
\item[(a)] If $\gr_{f_1}(R)$ is a domain (resp. right noetherian, resp. Auslander regular), then so is $\gr_{f_1}(S)$, and hence so is $S$.
\item[(b)] If $\gr_{f_1}(R)$ is right noetherian, then $\mathrm{r.}\Kdim(S) \leq \mathrm{r.}\Kdim(\gr_{f_1}(R)) + n$ and $\mathrm{r.}\gldim(S)$ $\leq \mathrm{r.}\gldim(\gr_{f_1}(R)) + n$.
\end{itemize}

The proofs in \cite{wang-quantum} were given under the erroneous assumption that $\overline\delta = 0$ (in the notation of Lemma \ref{lem: isom of graded rings}), but they remain true with essentially identical proofs even after removing this assumption. (We note that they also hold on replacing $f_1$ by $f_2$, of course.)

Throughout this subsection, we fix \emph{either} $f = f_1$ \emph{or} $f = f_2$, giving a filtration $f$ on $R$ and $S$. The expressions $\gr(R)$ and $\gr(S)$ will always implicitly mean $\gr_f(R)$ and $\gr_f(S)$ respectively.

We record some further important properties that lift from the graded ring.

\begin{propn}\label{propn: primality}
Let $R$ be a complete local ring, and $S\in\SPS^n(R)$. If $\gr(R)$ is prime, then $\gr(S)$ is prime, and hence $S$ is prime.
\end{propn}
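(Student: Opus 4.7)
The plan is to induct on $n$, with the inductive statement being: for any complete local ring $R$ with $\gr(R)$ prime and any $S\in\SPS^n(R)$, $\gr(S)$ is prime (and hence, by the symbol argument below, $S$ is prime).

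For the base case $n=1$, $S \in \SPS(R)$. By Lemma~\ref{lem: isom of graded rings} (when $f=f_1$) or Lemma~\ref{lem: gr under new filtration} (when $f=f_2$), $\gr(S)$ is isomorphic to a skew polynomial ring $\gr(R)[z;\overline{\sigma},\overline{\delta}]$ (respectively $\gr(R)[z;\overline{\sigma}]$) over $\gr(R)$, with $\overline{\sigma}$ a graded automorphism. The classical fact that a skew polynomial extension of a prime ring by an automorphism-and-derivation pair is again prime (see e.g.\ \cite{MR}) then gives $\gr(S)$ prime.

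For the inductive step, write $S\in\SPS(T)$ with $T\in\SPS^{n-1}(R)$. By Remark~\ref{rks: rank, gr, known properties}(iii), the filtration $f$ on $S$ restricts to the corresponding filtration on $T$, and $\gr_f(T)$ sits as a graded subring of $\gr_f(S)$. The inductive hypothesis supplies primality of $\gr_f(T)$, and applying the base-case argument to the one-step extension $T\subseteq S$ yields primality of $\gr_f(S)$.

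Finally, I lift primality from $\gr_f(S)$ to $S$ by a standard principal-symbol argument. Given nonzero $a,b\in S$, separatedness of $f$ ensures that the symbols $\alpha=\gr(a)$ and $\beta=\gr(b)$ are nonzero homogeneous elements of $\gr_f(S)$. Primality of $\gr_f(S)$ furnishes some $\xi\in\gr_f(S)$ with $\alpha\xi\beta\neq 0$; decomposing $\xi$ into homogeneous components and using that $\alpha,\beta$ are homogeneous (so the summands $\alpha\xi_i\beta$ live in pairwise distinct degrees), at least one homogeneous component $\gamma$ satisfies $\alpha\gamma\beta\neq 0$. Lifting $\gamma$ to $c\in S$ with $\gr(c)=\gamma$, the fact that $\alpha\gamma\beta\neq 0$ in $\gr_f(S)$ forces $\gr(acb)=\alpha\gamma\beta$, whence $acb\neq 0$; thus $aSb\neq 0$ and $S$ is prime. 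The only non-routine input is the classical primality of skew polynomial extensions; the rest is bookkeeping around the filtration.
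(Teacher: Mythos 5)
Your proof is correct and follows essentially the same route as the paper: identify $\gr(S)$ as an iterated skew polynomial extension of $\gr(R)$, apply the classical fact that a skew polynomial extension of a prime ring is prime at each step, and then lift primality from the associated graded ring to $S$. The only cosmetic difference is that you prove the lifting step explicitly via the principal-symbol argument, where the paper simply cites \cite[II, Lemma 3.2.7]{LVO}.
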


\begin{proof}
If $\gr(R)$ is prime, then $\gr(S)$ is prime by Remark \ref{rks: rank, gr, known properties}(iv) along with iterated application of \cite[Theorem 1.2.9(iii)]{MR}. Hence $S$ is prime by \cite[II, Lemma 3.2.7]{LVO}.
\end{proof}

\begin{cor}\label{cor: as-gorenstein}
Let $R$ be a complete local ring, and $S\in\SPS^n(R)$. Suppose that $\gr(R)$ is Auslander regular. Then $S$ is AS-Gorenstein.
\end{cor}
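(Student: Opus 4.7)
The strategy is to lift the Artin--Schelter condition from $\gr(S)$ to $S$, once we know $\gr(S)$ behaves well. First I would unpack the graded ring: iterating Lemma \ref{lem: isom of graded rings} (or Lemma \ref{lem: gr under new filtration} if we work with $f = f_2$) along the chain $R = R_0 \subseteq R_1 \subseteq \cdots \subseteq R_n = S$ exhibits $\gr(S)$ as an iterated Ore extension
\[
\gr(S) \cong \gr(R)[z_1;\overline{\sigma_1},\overline{\delta_1}]\cdots[z_n;\overline{\sigma_n},\overline{\delta_n}]
\]
of $\gr(R)$, with each $\overline{\sigma_i}$ a graded automorphism and each $\overline{\delta_i}$ a graded skew derivation of degree $1$ (or zero, in the $f_2$ case).

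Next I would invoke property (a) from \cite[Corollary 2.9]{wang-quantum} (stated in the excerpt), which ensures that the Auslander-regular property is inherited by $\gr(S)$ and thence by $S$ itself via the Zariskian filtration $f$. In particular $S$ has finite global dimension, so finite injective dimension --- half of the AS-Gorenstein data.

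The heart of the argument is the AS-condition on $\Ext^\bullet_S(S/\mathfrak{n}, S)$. I would first verify it on the graded side: $\gr(R)$ is a connected $\mathbb{N}$-graded Auslander-regular algebra over the residue ring $R/\mathfrak{m}$, and this is enough to make $\gr(R)$ AS-Gorenstein (in the version where the ``point module'' is $\gr(R)/\gr(R)_{+} \cong R/\mathfrak{m}$). The AS-Gorenstein property is preserved under each Ore extension $(-)[z;\overline{\sigma},\overline{\delta}]$ by a standard change-of-rings/Rees-style computation, so $\gr(S)$ is AS-Gorenstein with respect to the ``augmentation'' $\gr(S) \to \gr(R)/\gr(R)_{+} \cong S/\mathfrak{n}$, shifted in degree by $n$.

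Finally, I would transport this conclusion to $S$ using the Zariskian machinery of Li--Van Oystaeyen: the filtration $f$ is separated and complete (hence Zariskian), so there is a convergent spectral sequence relating $\Ext^\bullet_{\gr(S)}(\gr(S/\mathfrak{n}), \gr(S))$ with $\Ext^\bullet_S(S/\mathfrak{n}, S)$, and the concentration of the former in a single degree (together with its identification with the appropriate shift of $S/\mathfrak{n}$) forces the same for the latter. The main obstacle I anticipate is precisely this last lifting step: one must check that $\gr(S/\mathfrak{n})$ is the correct ``point module'' for $\gr(S)$ (i.e. that the induced filtration on $S/\mathfrak{n}$ is the right one so that $\gr(S/\mathfrak{n}) \cong \gr(S)/\gr(S)_+$), and that no degree-twisting pathology obstructs the identification of $\Ext^d_S(S/\mathfrak{n}, S)$ as a one-dimensional $S/\mathfrak{n}$-bimodule.
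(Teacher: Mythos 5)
Your opening step (lifting Auslander regularity from $\gr(R)$ to $\gr(S)$ to $S$ via \cite[Corollary 2.9]{wang-quantum}) matches the paper, but everything after that diverges, and the divergence contains a genuine gap. You assert that $\gr(R)$, being a ``connected $\mathbb{N}$-graded Auslander-regular algebra over the residue ring'', is automatically AS-Gorenstein. That implication is not a freely available fact: the standard result in this direction (Levasseur) deduces the AS-Gorenstein property from Auslander--Gorenstein \emph{plus Cohen--Macaulay} for connected graded algebras over a field, and neither extra hypothesis is present here. Worse, $\gr(R)_0 \cong R/\mathfrak{m}$ is only simple artinian in this generality (the corollary assumes $R$ local, not scalar local), so $\gr(R)$ need not even be connected graded in the usual sense. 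Since this is the step that actually produces the Artin--Schelter condition, the whole argument rests on an unproved claim; the subsequent Ore-extension preservation and the filtered-to-graded spectral sequence (which you correctly flag as delicate) cannot repair it.

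The paper avoids the graded AS condition entirely. Having obtained that $S$ is Auslander regular, hence Auslander--Gorenstein, it observes that $S$ is scalar local by Lemma \ref{lem: SPS extensions are local}(ii) and then quotes \cite[Lemma 4.3]{chan-wu-zhang}, which converts ``Auslander--Gorenstein $+$ scalar local'' directly into ``AS-Gorenstein'' for the complete local ring $S$ itself. In other words, the missing idea is that there is a \emph{local} (ungraded) analogue of the graded result you were reaching for, applicable to $S$ with its maximal ideal $\mathfrak{n}$, which makes the detour through $\gr(S)$ and the lifting spectral sequence unnecessary. If you want to salvage your route, you would need to supply a proof (or reference) that Auslander regularity alone forces the AS condition on the graded side --- which is essentially the same difficulty you would face for $S$ directly, and is exactly what the Chan--Wu--Zhang citation is there to handle.
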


\begin{proof}
$S$ is Auslander regular by \cite[2.3(iii)]{wang-quantum}, and so in particular $S$ is Auslander-Gorenstein. As $S$ is scalar local by Lemma \ref{lem: SPS extensions are local}(ii), we deduce from \cite[Lemma 4.3]{chan-wu-zhang} that $S$ is AS-Gorenstein.
\end{proof}

\begin{lem}\label{lem: max order}
Let $R$ be a complete local ring such that $\gr(R)$ is a maximal order, and take $S\in\SPS^n(R)$. Then $S$ is a maximal order.
\end{lem}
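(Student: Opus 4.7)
The plan is a two-step filtered/graded argument: first show that $\gr_f(S)$ is a Noetherian prime maximal order (using the classical theorem that skew polynomial extensions preserve the maximal-order property), and then lift this to $S$ using the fact that $f$ is a Zariskian filtration. Throughout, $f$ denotes either $f_1$ or $f_2$ as fixed earlier in this subsection.

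First, I would iterate Lemma \ref{lem: isom of graded rings} (or Lemma \ref{lem: gr under new filtration} in the case $f=f_2$) to obtain an isomorphism of graded rings
\[
\gr_f(S) \;\cong\; \gr_f(R)[z_1;\overline{\sigma}_1,\overline{\delta}_1]\cdots[z_n;\overline{\sigma}_n,\overline{\delta}_n],
\]
exhibiting $\gr_f(S)$ as an $n$-fold iterated skew polynomial extension of $\gr_f(R)$ (with every $\overline{\delta}_i=0$ in the case $f=f_2$). I would then argue by induction on $n$: at each intermediate stage the underlying ring is Noetherian by \cite[Corollary 2.9]{wang-quantum}, prime by Proposition \ref{propn: primality}, and a maximal order by the inductive hypothesis. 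The classical theorem of Chamarie asserts that if $A$ is a Noetherian prime maximal order, $\sigma\in\Aut(A)$ an automorphism, and $\delta$ a $\sigma$-derivation, then $A[z;\sigma,\delta]$ is again a Noetherian prime maximal order; iterating this $n$ times establishes that $\gr_f(S)$ is a Noetherian prime maximal order.

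Next, the filtration $f$ on $S$ is Zariskian: $f_1$ is Zariskian because $S$ is complete local Noetherian, so $\mathfrak{n}$ lies in the Jacobson radical and the associated Rees ring is Noetherian; and $f_2$ is explicitly stated to be Zariskian in Lemma \ref{lem: gr under new filtration}. The standard Zariskian lifting result (cf.\ \cite{LVO}) then asserts that the maximal-order property transfers from $\gr_f(S)$ to $S$, giving the desired conclusion.

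The main obstacle is assembling the two classical ingredients in the correct form: Chamarie's theorem on skew polynomial extensions (which needs the base ring to be simultaneously Noetherian and prime, exactly the situation guaranteed by \cite[Corollary 2.9]{wang-quantum} and Proposition \ref{propn: primality} at each step) and the Zariskian lifting theorem for maximal orders. Beyond these, the inductive bookkeeping is routine, and the case distinction between $f_1$ and $f_2$ only affects whether the skew polynomial extensions are truly skew-differential or purely automorphic, which is immaterial for Chamarie's theorem.
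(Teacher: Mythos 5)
Your proposal is correct and follows essentially the same route as the paper: the paper's (very terse) proof likewise establishes that $\gr_f(S)$ is a Noetherian prime maximal order via the iterated skew polynomial description of the associated graded ring, and then invokes the Zariskian lifting theorem of \cite{LVO} (Ch.~II, \S 3, Theorem 11, argued as in Corollary 12 there) to transfer the maximal-order property to $S$. Your write-up simply makes explicit the two ingredients (Chamarie's theorem and the Zariskian transfer) that the paper leaves implicit in its citations.
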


\begin{proof}
This follows from \cite[Ch. II, \S 3, Theorem 11]{LVO} alongside the above, similarly to the proof of \cite[Ch. II, \S 3, Corollary 12]{LVO}.
\end{proof}

\section{Rigid iterated extensions}

Let $(R_0,\mathfrak{m}_0)$ be a complete local ring, and let $S \in\SPS^n(R_0)$ for $n\geq 2$. That is, for each $1\leq i\leq n$, we may inductively find $R_i\in\SPS(R_{i-1})$, say $R_i = R_{i-1}[[x_i;\sigma_i,\delta_i]]$ with maximal ideal $\mathfrak{m}_i$, such that $S = R_n$. For convenience, write $\sigma = \sigma_n$, $\delta = \delta_n$.

We noted in Remark \ref{rk: preserving local structure} that we wanted each $R_i$ to interact ``nicely" with the topology of $R_{i-1}$, i.e. $(\sigma_i,\delta_i)$ should be a \emph{local} skew derivation of $(R_{i-1},\mathfrak{m}_{i-1})$ (see Definition \ref{defn: LSPSR over R}(ii--iii)). Unfortunately, naively iterating this procedure as in Definition \ref{defn: ILSPSR}, we may end up in a situation like the following. Writing the above chain of local rings explicitly,
$$(R_0,\mathfrak{m}_0)\lneq (R_1,\mathfrak{m}_1) \lneq \dots \lneq (R_{n-1},\mathfrak{m}_{n-1}) \lneq S,$$
and $(\sigma,\delta)$ is a local skew derivation of $R_{n-1}$ as desired, but we are not guaranteed that it restricts to a local skew derivation $(\sigma|_{R_i}, \delta|_{R_i})$ of $R_i$ for any $0\leq i\leq n-2$, as the following example shows.
\begin{ex}
Let $k$ be a field. Let $R_0 = k[[Y]] \lneq k[[Y,Z]] = R_1$, so that $R_0$ has maximal ideal $\mathfrak{m}_0 = (Y)$. Take $S = R[[X; \sigma, \delta]]$, where $\sigma$ is the $k$-linear automorphism of $R$ given by $\sigma(Y) = Z$, $\sigma(Z) = -Y$, and $\delta = 0$. Then $S\in\SPS_k^2(R_0)$, but $\sigma(\mathfrak{m}_0) \not\subseteq \mathfrak{m}_0$.
\end{ex}
This turns out to be a very natural stipulation to make when performing this iterative construction, and this motivates the definitions we make in this section.

\subsection{Quotients by stable ideals}
Recall the following basic definition (cf. \cite[3.13]{letzter-noeth-skew}).

\begin{defn}
Let $R$ be a ring and $I$ an ideal. If $(\sigma,\delta)$ is a skew derivation of $R$, then $I$ is said to be a \emph{$(\sigma,\delta)$-ideal} if $\sigma(I)\subseteq I$ and $\delta(I)\subseteq I$.
\end{defn}

This is a useful class of ideals because of results such as the following lemma:

\begin{lem}\label{lem: quotient by ideals}
Let $R$ be a complete local ring, and $S = R[[x;\sigma,\delta]]\in\SPS(R)$. Suppose that $I$ is a $(\sigma,\delta)$-ideal of $R$. Then $IS = SI$ is a two-sided ideal, and $S/IS \cong (R/I)[[x;\sigma,\delta]]\in\SPS(R/I)$.
\end{lem}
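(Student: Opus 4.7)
My plan is to exhibit $(R/I)[[x;\overline{\sigma},\overline{\delta}]]$ as a quotient of $S$ by constructing a surjective ring homomorphism $\phi$ and identifying its kernel with both $IS$ and $SI$.

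First I would descend $(\sigma,\delta)$ to the residue ring. Using noetherianity of $R$, the ascending chain $I \subseteq \sigma^{-1}(I) \subseteq \sigma^{-2}(I) \subseteq \cdots$ must stabilise, so $\sigma(I) \subseteq I$ upgrades to $\sigma(I) = I$; hence $\sigma$ descends to an automorphism $\overline{\sigma}$ of $R/I$, and $\delta$ to a well-defined $\overline{\sigma}$-derivation $\overline{\delta}$. The locality conditions $\overline{\sigma}(\mathfrak{m}/I) \subseteq \mathfrak{m}/I$, $\overline{\delta}(R/I) \subseteq \mathfrak{m}/I$, and $\overline{\delta}(\mathfrak{m}/I) \subseteq (\mathfrak{m}/I)^2$ all follow at once from the corresponding ones for $(\sigma,\delta)$, and $R/I$ is complete for its maximal ideal topology, so Venjakob's existence lemma yields $(R/I)[[x;\overline{\sigma},\overline{\delta}]] \in \SPS(R/I)$.

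Next, using the topological left-$R$-module identification $S \cong \prod_{i\geq 0} Rx^i$, I would define $\phi: S \to (R/I)[[x;\overline{\sigma},\overline{\delta}]]$ coefficient-wise by $\sum r_n x^n \mapsto \sum \overline{r_n}x^n$. It is manifestly continuous and surjective, and checking that it respects the defining relation $xr = \sigma(r)x + \delta(r)$ reduces to the fact that $\phi$ intertwines $\sigma$ with $\overline{\sigma}$ and $\delta$ with $\overline{\delta}$. Its kernel is visibly $J := \{\sum r_n x^n \in S : r_n \in I \text{ for every } n\}$.

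The crux is to show $J = IS = SI$. Both $IS$ and $SI$ sit inside $J$ because $J$ is a two-sided ideal containing $I$. For $J \subseteq IS$, fix right-generators $I = a_1R + \cdots + a_kR$: for an arbitrary $\sum r_n x^n \in J$, decompose each $r_n = \sum_j a_j t_{n,j}$ and regroup as $\sum_j a_j(\sum_n t_{n,j}x^n)\in IS$. For $J \subseteq SI$, fix left-generators $I = Rb_1 + \cdots + Rb_\ell$, and first show by induction on $n$ that $bx^n \in SI$ for every $b \in I$: the base case is trivial, and the inductive step uses the identity $bx = x\sigma^{-1}(b) - \delta(\sigma^{-1}(b))$ together with $\sigma^{-1}(I) = I$ and $\delta\sigma^{-1}(I) \subseteq I$. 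Hence every element of $J$ with finite $x$-support lies in $SI$; passing to the $\mathfrak{n}$-adic limit gives the general case, provided the finitely generated left ideal $SI = Sb_1 + \cdots + Sb_\ell$ is closed in $S$. This closure step — which implicitly requires $S$ to be noetherian, a condition available in the paper's setting — is the main subtlety. Once $J = IS = SI$ is established, the first isomorphism theorem identifies $S/IS$ with $(R/I)[[x;\overline{\sigma},\overline{\delta}]]\in\SPS(R/I)$.
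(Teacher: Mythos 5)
Your proof is correct in substance, but it takes a genuinely different route from the paper: the paper's entire proof is a two-line citation, verifying that the setup of Letzter's work on noetherian skew power series rings applies and then invoking her results \cite[3.13(ii), Lemma 3.14(iv)]{letzter-noeth-skew}, whereas you give a self-contained argument. Your construction is the ``right'' direct proof: descending $(\sigma,\delta)$ to $R/I$ (and your noetherian chain argument for upgrading $\sigma(I)\subseteq I$ to $\sigma(I)=I$ is genuinely needed, since otherwise $\overline{\sigma}$ would only be a surjective endomorphism), building the coefficient-wise surjection $\phi$, and identifying $\ker\phi$ with $IS$ and $SI$. The regrouping argument for $J\subseteq IS$ is clean and needs no limits; the $J\subseteq SI$ direction via $bx=x\sigma^{-1}(b)-\delta(\sigma^{-1}(b))$ and a limiting argument is also fine, and you correctly isolate the one real subtlety, namely that $SI$ must be closed in $S$. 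That closure does hold in the paper's setting: the $\mathfrak{n}$-adic filtration on $S$ is complete with noetherian associated graded ring (hence Zariskian), so all one-sided ideals are closed --- this is exactly the kind of hypothesis that Letzter's ``Setup 3.1'' packages and that the paper's citation handles implicitly. Two small points you assert without proof but which live in the same circle of ideas: $R/I$ is complete for the $\mathfrak{m}/I$-adic topology only because $I$ is closed in $R$ (again Zariskian), and the convergence of the partial sums $\sum_{n\leq N}r_nx^n$ to $\sum_n r_nx^n$ uses that the tail can be written as $\bigl(\sum_{m\geq 0}r_{m+N+1}x^m\bigr)x^{N+1}\in\mathfrak{n}^{N+1}$. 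What your approach buys is transparency --- one sees exactly where each hypothesis ($\sigma(I)=I$, $\delta(I)\subseteq I$, noetherianity/completeness) is used --- at the cost of re-proving machinery the paper outsources.
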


\begin{proof}
Note that \cite[Setup 3.1(4)]{letzter-noeth-skew} is satisfied by Remark \ref{rks: rank, gr, known properties}(iv). Hence we may apply \cite[3.13(ii) and Lemma 3.14(iv)]{letzter-noeth-skew}.
\end{proof}

We slightly extend this notion as follows.

\begin{defn}
Let $R$ be a ring and $I$ an ideal. Let $\bm{\sigma} = (\sigma_1, \dots, \sigma_r)$ and $\bm{\delta} = (\delta_1, \dots, \delta_r)$, where each $(\sigma_i, \delta_i)$ is a skew derivation of $R$. We will say that $I$ is a \emph{$(\bm{\sigma,\delta})$-ideal} if it is a $(\sigma_i,\delta_i)$-ideal for each $1\leq i\leq r$.
\end{defn}

\begin{propn}\label{propn: quotient by ideals}
Let $R$ be a complete local ring, and $S = R[[\bm{x;\sigma,\delta}]]\in\SPS^n(R)$. Suppose that $I$ is a $(\bm{\sigma,\delta})$-ideal of $R$. Then $IS = SI$ is a two-sided ideal, and $S/IS \cong (R/I)[[\bm{x;\sigma,\delta}]]\in\SPS^n(R/I)$.
\end{propn}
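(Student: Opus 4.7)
The plan is to proceed by induction on $n$, with the base case $n=1$ being precisely Lemma \ref{lem: quotient by ideals} (which was invoked in order to motivate the present definition). The inductive step should then reduce matters to a single application of Lemma \ref{lem: quotient by ideals} once we have verified the appropriate stability condition at the top level.

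For the inductive step, suppose the result holds for $n-1 \geq 1$. Write $T = R[[x_1;\sigma_1,\delta_1]]\dots[[x_{n-1};\sigma_{n-1},\delta_{n-1}]] \in \SPS^{n-1}(R)$, so that $S = T[[x_n;\sigma_n,\delta_n]]$. Since $I$ is in particular a $(\sigma_i,\delta_i)$-ideal of $R$ for each $1\leq i\leq n-1$, the inductive hypothesis applies to $T$ and gives that $IT = TI$ is a two-sided ideal of $T$, with an isomorphism
$$T/IT \;\cong\; (R/I)[[x_1;\overline{\sigma_1},\overline{\delta_1}]]\dots[[x_{n-1};\overline{\sigma_{n-1}},\overline{\delta_{n-1}}]] \;\in\; \SPS^{n-1}(R/I).$$
To apply Lemma \ref{lem: quotient by ideals} to the extension $S = T[[x_n;\sigma_n,\delta_n]]$ and the ideal $IT$, it remains to verify that $IT$ is a $(\sigma_n,\delta_n)$-ideal of $T$.

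This is where the only real content of the proof lies. Both $\sigma_n$ and $\delta_n$ are continuous on $T$ with respect to its $\mathfrak{m}_{n-1}$-adic topology (by the local condition in Definition \ref{defn: LSPSR over R}), so they commute with the convergent sums and products used in writing elements of $IT$. On a generic product $at$ with $a \in I$ and $t \in T$ we compute $\sigma_n(at) = \sigma_n(a)\sigma_n(t) \in IT$ using $\sigma_n(I)\subseteq I$, and $\delta_n(at) = \delta_n(a) t + \sigma_n(a)\delta_n(t) \in IT$ using both $\delta_n(I) \subseteq I$ and $\sigma_n(I)\subseteq I$; the continuity of $\sigma_n$ and $\delta_n$ then extends these containments from generators to all of $IT$. (This is the step I expect to take a little care to state cleanly, but it is not conceptually difficult.)

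Finally, Lemma \ref{lem: quotient by ideals} yields that $(IT)S = S(IT)$ is a two-sided ideal of $S$ and
$$S/(IT)S \;\cong\; (T/IT)[[x_n;\overline{\sigma_n},\overline{\delta_n}]] \;\in\; \SPS(T/IT).$$
Since $I \subseteq IT$ and $IT = I\cdot T$ with $T\subseteq S$, one checks directly that $(IT)S = IS$ and $S(IT) = SI$, so $IS = SI$. Substituting the description of $T/IT$ given by the inductive hypothesis into the displayed isomorphism identifies $S/IS$ with $(R/I)[[\bm{x;\sigma,\delta}]] \in \SPS^n(R/I)$, completing the induction.
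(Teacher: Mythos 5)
Your proof is correct and takes the same route as the paper, whose entire proof reads ``Both claims follow from recursive application of Lemma \ref{lem: quotient by ideals}.'' The one piece of content you supply --- verifying that $IT$ is a $(\sigma_n,\delta_n)$-ideal of $T$ so that Lemma \ref{lem: quotient by ideals} can be applied at the top level, via the derivation identity $\delta_n(at)=\delta_n(a)t+\sigma_n(a)\delta_n(t)$ --- is exactly the step the paper leaves implicit, and your treatment of it is right.
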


\begin{proof}
Both claims follow from recursive application of Lemma \ref{lem: quotient by ideals}.
\end{proof}

\subsection{Skew power series presentations}

\begin{defn}\label{defn: presn}
Fix a complete local ring $(R_0, \mathfrak{m}_0)$ and a ring $R\in \SPS^n(R_0)$. A \emph{presentation} for $R$ (over $R_0$) is a sequence of rings
\begin{equation}\label{eqn: presn of R}
R_0 \lneq R_1 \lneq \dots \lneq R_{\ell-1} \lneq R_\ell = R,
\end{equation}
where $R_i\in\SPS^{d_i}(R_{i-1})$ for each $1\leq i\leq \ell$, and $d_1 + \dots + d_\ell = n$. The number $\ell$ is called the \emph{length} of the presentation. 
\end{defn}

\begin{rk}\label{rk: upper bound on presn length}
Note that, by definition, each $d_i\geq 1$, so the length $\ell$ of any presentation (\ref{eqn: presn of R}) must be bounded above by the rank $n$ of $R$ over $R_0$.
\end{rk}

\begin{defn}\label{defn: stability}
With $R\in\SPS^n(R_0)$ as above, denote $R = R_0[[\bm{x;\sigma,\delta}]]$. We will say that the presentation (\ref{eqn: presn of R}) is \emph{stable} if the maximal ideal of $R_i$ is a $(\sigma_j,\delta_j)$-ideal for each pair $(i,j)$ satisfying $0\leq i < j\leq \ell$.
\end{defn}

\begin{defn}
We say that $R\in\SPS^n(R_0)$ is \emph{rigid} if it admits a stable presentation of length $n$, i.e. a saturated presentation which is also stable, and we will denote this by $R\in\RSPS^n(R_0)$. (As before, we will write $\RSPS_A^n(R_0) = \SPS_A^n(R_0)\cap \RSPS^n(R_0)$.)
\end{defn}

\begin{propn}\label{propn: quotient extensions}
Let $R_0$ be a complete local ring with residue ring $k$, and take some $R\in\RSPS^n(R_0)$, say with stable presentation $(R_0,\mathfrak{m}_0) \lneq (R_1,\mathfrak{m}_1) \lneq \dots \lneq (R_n,\mathfrak{m}_n) = (R,\mathfrak{m})$. Then $R/\mathfrak{m}_iR\in \RSPS^{n-i}(k)$.
\end{propn}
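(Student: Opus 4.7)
The plan is to produce a stable presentation for $R/\mathfrak{m}_iR$ of length $n-i$ over $k$ by descending the given stable presentation of $R$ to the quotient. The candidate chain is
$$k \cong R_i/\mathfrak{m}_i \lneq R_{i+1}/\mathfrak{m}_iR_{i+1} \lneq \cdots \lneq R_n/\mathfrak{m}_iR_n = R/\mathfrak{m}_iR.$$
Three things need to be checked: that the leftmost ring is indeed $k$; that each inclusion is a single local skew power series extension; and that the resulting presentation is stable in the sense of Definition~\ref{defn: stability}.

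For the first, I would apply Lemma~\ref{lem: SPS extensions are local}(i) iteratively to the sub-tower $R_0 \lneq R_1 \lneq \cdots \lneq R_i$, each step of which lies in $\SPS^1$ by hypothesis. This yields $R_i/\mathfrak{m}_i \cong R_{i-1}/\mathfrak{m}_{i-1} \cong \cdots \cong R_0/\mathfrak{m}_0 = k$.

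For the second, I would apply Lemma~\ref{lem: quotient by ideals} to each pair $R_{\ell-1} \lneq R_\ell$ for $i < \ell \leq n$. The input required is that $\mathfrak{m}_iR_{\ell-1}$ is a $(\sigma_\ell,\delta_\ell)$-ideal of $R_{\ell-1}$: stability of the original presentation gives $\sigma_\ell(\mathfrak{m}_i), \delta_\ell(\mathfrak{m}_i) \subseteq \mathfrak{m}_i$, and combining these with multiplicativity of $\sigma_\ell$ and the Leibniz rule for $\delta_\ell$ extends the containments to $\mathfrak{m}_iR_{\ell-1}$. Lemma~\ref{lem: quotient by ideals} then gives $R_\ell/\mathfrak{m}_iR_\ell \cong (R_{\ell-1}/\mathfrak{m}_iR_{\ell-1})[[x_\ell;\overline{\sigma_\ell},\overline{\delta_\ell}]] \in \SPS(R_{\ell-1}/\mathfrak{m}_iR_{\ell-1})$, where the bars denote the skew derivation induced on the quotient.

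For the third, I would verify stability of the new presentation. Inside $R_{\ell-1}/\mathfrak{m}_iR_{\ell-1}$, the image of $R_j$ (for $i \leq j < \ell$) identifies with $(R_j + \mathfrak{m}_iR_{\ell-1})/\mathfrak{m}_iR_{\ell-1}$, and its maximal ideal is the image of $\mathfrak{m}_j$. I need $(\overline{\sigma_\ell},\overline{\delta_\ell})$ to preserve this image, which lifts to the containments $\sigma_\ell(\mathfrak{m}_j + \mathfrak{m}_iR_{\ell-1}) \subseteq \mathfrak{m}_j + \mathfrak{m}_iR_{\ell-1}$ and the analogue for $\delta_\ell$; both follow by combining stability of the original presentation (applied to $\mathfrak{m}_j$) with the containments for $\mathfrak{m}_iR_{\ell-1}$ established in the previous step. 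The main obstacle is essentially bookkeeping: one must carefully distinguish the ideal $\mathfrak{m}_iR_j \subseteq R_j$ from its enlargements $\mathfrak{m}_iR_\ell$ for $\ell > j$, and track in which subring's quotient the maximal ideal is being considered. Once this is set up, the stability hypothesis is strong enough that every containment needed reduces directly to one supplied for the original presentation, so no genuinely new ideas arise.
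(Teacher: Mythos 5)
Your proposal is correct and rests on the same mechanism as the paper's proof, namely quotienting by the $(\sigma_\ell,\delta_\ell)$-stable ideal generated by $\mathfrak{m}_i$ via Lemma~\ref{lem: quotient by ideals}; the paper merely packages the iteration through Proposition~\ref{propn: quotient by ideals} and handles $i>0$ by re-basing the presentation at $R_i$ and invoking the $i=0$ case. Your version is slightly more explicit in one respect the paper treats as immediate --- the verification that the induced presentation of the quotient is itself stable --- but this is a matter of detail rather than a different route.
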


\begin{proof}
When $i=0$, we may apply Proposition \ref{propn: quotient by ideals}, and the result is immediate.

When $i>0$, set $R'_0 = R_i$, so that $R'_0$ is a complete local ring with residue ring $k$ by Lemma \ref{lem: SPS extensions are local}. We may view $R$ as an element of $\RSPS^{n-i}(R'_0)$, with stable presentation $(R'_0,\mathfrak{m}'_0) \lneq (R'_1 ,\mathfrak{m}'_1)\lneq \dots \lneq (R'_{n-i},\mathfrak{m}'_{n-i}) = R,$
where $R'_t = R_{t+i}$ and $\mathfrak{m}'_t = \mathfrak{m}_{t+i}$. Now the result follows from the previous case.
\end{proof}

\subsection{Examples}\label{subsection: examples}

It turns out that many interesting iterated local skew power series extensions arising in nature are indeed rigid. Examples \ref{ex: q-commutatives}--\ref{ex: rigid soluble IAs} below were important motivating examples for this paper, and will be referred to later.

\begin{ex}\label{ex: q-commutatives}
\textbf{$q$-commutative power series rings.} Let $k$ be a field, and $q\in M_n(k^\times)$ a \emph{multiplicatively antisymmetric} matrix (i.e. $q_{ij} q_{ji} = 1$ for all $i$ and $j$). Then the \emph{$q$-commutative power series ring} is the ring
$$R = k_q[[x_1, \dots, x_n]],$$
defined as follows: $R \cong k[[x_1, \dots, x_n]]$ as topological $k$-modules; and the multiplication is given by the $n^2$ relations $x_i x_j = q_{ij} x_j x_i$. It is easy to see that $R\in\RSPS_k^n(k)$.
\end{ex}

For the following two examples, we assume familiarity with the notion of \emph{uniform groups} \cite{DDMS} and \emph{completed group rings} (also known as \emph{Iwasawa algebras}) \cite{lazard}.

\begin{ex}\label{ex: nilpotent IAs}
\textbf{Nilpotent Iwasawa algebras.} Let $G$ be a nilpotent uniform group. $G$ admits a series
$$\{1\} = G_0 \leq G_1 \leq \dots \leq G_n = G$$
of closed subgroups of $G$ such that $G_{i-1}$ is normal in $G$ and $G_i/G_{i-1} \cong \mathbb{Z}_p$ for all $1\leq i\leq n$: an appropriate refinement of the isolated lower central series, defined in \cite[Definition 3.7]{woods-struct-of-G}, will suffice. Hence $G$ is \emph{supersoluble}, as defined below. Now it will follow from Example \ref{ex: rigid soluble IAs} below that its appropriate Iwasawa algebras are rigid.
\end{ex}

\begin{ex}\label{ex: rigid soluble IAs}
\textbf{Supersoluble Iwasawa algebras.} Let $G$ be a \emph{supersoluble} uniform group, i.e. there exists a sequence
$$\{1\} = G_0 \leq G_1 \leq \dots \leq G_n = G$$
of closed subgroups of $G$ such that $G_{i-1}$ is normal in $G$ and $G_i/G_{i-1} \cong \mathbb{Z}_p$ for all $1\leq i\leq n$. Fix such a sequence. Let $k$ be \emph{either} the ring of integers of a finite extension of $\mathbb{Q}_p$ \emph{or} a finite field extension of $\mathbb{F}_p$. Now, as in \cite[Example 2.3]{venjakob}, $kG\in\SPS^n_k(k)$ with presentation
$$k = kG_0 \leq kG_1 \leq \dots \leq kG_n = kG,$$
and the condition that $G_{i-1}$ should be normal in $G$ ensures that this presentation is stable. Hence $kG\in\RSPS^n_k(k)$.

We demonstrate the existence of some supersoluble, non-nilpotent uniform groups of small rank below. In both cases, uniformity can be easily checked using \cite[Theorem 4.5]{DDMS}.
\begin{enumerate}[label=(\roman*)]
\item Let $\Gamma_1$ and $\Gamma_2$ be copies of $\mathbb{Z}_p$, and take the continuous group homomorphism $\rho: \Gamma_2\to \Aut(\Gamma_1)\cong \mathbb{Z}_p^\times$ sending a generator of $\Gamma_2$ to $1+p\in\mathbb{Z}_p^\times$. Form the semidirect product $G = \Gamma_1\rtimes \Gamma_2$. Then $G$ is a supersoluble but non-nilpotent uniform group of dimension 2. (cf. \cite[Example 2.2]{venjakob})
\item Take $p$ to be a prime congruent to $1$ mod $4$, so that $i := \sqrt{-1}\in k$. Let $A = \overline{\langle y, z\rangle} \cong \mathbb{Z}_p^2$ and $B = \overline{\langle x\rangle} \cong \mathbb{Z}_p$, and fix the left action of $B$ on $A$, say $\rho: B\to \Aut(A)$, defined by $\rho(x)(y) = yz^p$ and $\rho(x)(z) = zy^{-p}$. Then $G = B\rtimes A$ is easily checked to be a soluble, non-nilpotent uniform group of dimension $3$, and the chain of normal subgroups
$$1 \leq \overline{\langle yz^i\rangle} \leq B \leq A$$
shows that $G$ is in fact supersoluble. (Compare Non-example \ref{nonex: non-rigid soluble IAs}.)
\end{enumerate}
\end{ex}

The next two examples are not crucial for the current paper, but we include them to illustrate the wide applicability of Theorems B and D.

\begin{ex}\label{ex: horton's algs}
\textbf{Completed quantised $k$-algebras.} Let $k$ be a field, $\Gamma = (\gamma_{ij})\in M_n(k^\times)$ a multiplicatively antisymmetric matrix, and $P = (p_1, \dots, p_n)\in (k^\times)^n$, $Q = (q_1, \dots, q_n)\in (k^\times)^n$ two vectors with $p_i\neq q_i$ for all $1\leq i\leq n$. Horton's algebra $R = K_{n,\Gamma}^{P,Q}(k)$, a simultaneous generalisation of quantum symplectic space and quantum Euclidean $2n$-space, can be written as an iterated skew polynomial ring,
$$k[x_1][y_1; \tau_1][x_2; \sigma_2][y_2; \tau_2, \delta_2]\dots [x_n;\sigma_n][y_n; \tau_n, \delta_n],$$
where the $\sigma_i$ (for $2\leq i\leq n$) and $\tau_i$ (for $1\leq i\leq n$) are $k$-linear automorphisms, and each $\delta_i$ (for $2\lneq i\lneq n$) is a $k$-linear $\tau_i$-derivation. In \cite[\S 3.2]{wang-quantum}, it is proved that the $I$-adic completion $\hat{R}$ of $R$, where $I = (x_1, y_1, x_2, y_2, \dots, x_n, y_n)$, is an iterated skew power series extension of $k$:
\begin{equation}\label{eqn: completed horton}
\hat{R} = k[[x_1]][[y_1; \tau_1]][[x_2; \sigma_2]][[y_2; \tau_2, \delta_2]]\dots [[x_n;\sigma_n]][[y_n; \tau_n, \delta_n]]\in\SPS^{2n}_k(k).
\end{equation}
We do not spell out the relations in full: see \cite[Proposition 3.5]{horton} for details. It is only necessary, for our purposes, to know the following:
\begin{itemize}[noitemsep]
\item $\sigma_i(x_j), \tau_i(x_j), \delta_i(x_j)$ are scalar multiples of $x_j$ for all $j < i$;
\item $\sigma_i(y_j), \tau_i(y_j), \delta_i(y_j)$ are scalar multiples of $y_j$ for all $j < i$;
\item $\tau_i(x_i)$ is a scalar multiple of $x_i$ for all $i$;
\item $\delta_i(x_i)$ is a $k$-linear combination of the elements $y_lx_l$ for all $l<i$.
\end{itemize}
It now follows by an easy calculation that the saturated presentation associated to (\ref{eqn: completed horton}) is stable, and hence that $\hat{R}\in\RSPS^{2n}_k(k)$.
\end{ex}

\begin{ex}\label{ex: quantum matrices}
\textbf{Completed quantum matrix algebras.} Let $k$ be a field, $\lambda\in k^\times$ a scalar, and $\mathbf{p} = (p_{ij}) \in M_n(k^\times)$ a multiplicatively antisymmetric matrix (i.e. $p_{ij} p_{ji} = 1$ for all $i, j$). Then the \emph{multiparameter quantum $n\times n$ matrix algebra} $R = \mathcal{O}_{\lambda, \mathbf{p}}(M_n(k))$ can be defined (see e.g. \cite[Definition I.2.2]{brown-goodearl}) as a skew polynomial ring in $n^2$ variables labelled $X_{i,j}$ for each $1\leq i, j\leq n$, in which $k$ is central. Again, we do not spell out the relations in full, but we note:
\begin{itemize}[noitemsep]
\item $X_{l,m} X_{i,j}$ is a linear combination of $X_{i,j} X_{l,m}$ and $X_{i,m} X_{l,j}$ when $l>i$ and $m>j$;
\item $X_{l,m} X_{i,j}$ is a scalar multiple of $X_{i,j} X_{l,m}$ whenever either $l\leq i$ or $j\leq m$.
\end{itemize}
In \cite[\S 3.2]{wang-quantum} it is proved that the $I$-adic completion $\hat{R}$ of $R$, where $I$ is the ideal generated by the $n^2$ variables $X_{i,j}$ for $1\leq i,j\leq n$, is an iterated skew power series extension of $k$ satisfying the same relations: $\hat{R}\in\SPS^{n^2}_k(k)$. But the ``obvious" saturated presentations, e.g. those associated to
$$k[[X_{1,1}]][[X_{1,2}]][[X_{1,3}]]\dots[[X_{n,n}]]$$
(where we have omitted the skew derivations for readability) are usually \emph{not} stable. (For a counterexample, take $R$ to be the usual quantum $2\times 2$ matrix algebra $\mathcal{O}_q(M_2(k))$, given by $n=2$, $\mathbf{p} = \left(\begin{smallmatrix} 1&q\\q^{-1}&1\end{smallmatrix}\right)$, and $\lambda = q^{-2}$ for any $q\in k^\times$. See \cite[Definition I.1.7]{brown-goodearl}) for the relations in this case.)

We fix this by adjoining the variables in the following order:
\begin{itemize}[noitemsep]
\item at the \emph{0th stage}, adjoin the ``antidiagonal" elements $X_{i,j}$ satisfying $|i+j - (n+1)| = 0$, in any order;
\item at the \emph{1st stage}, adjoin those $X_{i,j}$ satisfying $|i+j - (n+1)| = 1$;
\item at the \emph{2nd stage}, adjoin those $X_{i,j}$ satisfying $|i+j-(n+1)| = 2$;
\end{itemize}
and so on, until finally all variables have been adjoined at the end of the $(n-1)$th stage.

Diagrammatically:

$$\underset{\text{0th stage}}{
\begin{pmatrix}
&&&&\iddots\\
&&&\circledast\\
&&\circledast\\
&\circledast\\
\circledast
\end{pmatrix}
}
\rightsquigarrow
\underset{\text{1st stage}}
{
\begin{pmatrix}
\vspace{-7pt}&&&\iddots&\iddots\\
&&\circledast&*&\iddots\\
&\circledast&*&\circledast\\
\circledast&*&\circledast\\
*&\circledast
\end{pmatrix}
}
\rightsquigarrow
\underset{\text{2nd stage}}{
\begin{pmatrix}
\vspace{-7pt}&&\iddots&\iddots&\iddots\\
\vspace{-7pt}&\circledast&*&*&\iddots\\
\circledast&*&*&*&\iddots\\
*&*&*&\circledast\\
*&*&\circledast
\end{pmatrix}
}
\rightsquigarrow \dots
$$

It is easy to verify that such a presentation of $\hat{R}$ is stable. The only case in which there is anything to prove is when $l>i$ and $m>j$: multiplying $X_{l,m}$ by $X_{i,j}$ results in a term involving the variables $X_{i,m}$ and $X_{l,j}$, and we must check that each of the variables $X_{i,m}$ and $X_{l,j}$ has been adjoined \emph{before} we adjoin both of $X_{l,m}$ and $X_{i,j}$.

More precisely, let $M = |\max\{i+j, l+m\} - (n+1)|$ be the first stage after which both $X_{l,m}$ and $X_{i,j}$ have been adjoined, and likewise let $N = |\max\{i+m, l+j\} - (n+1)|$ be the first stage after which both $X_{i,m}$ and $X_{l,j}$ have been adjoined. Then it is easy to see that the inequalities $l>i$ and $m>j$ imply
$$i+j < i+m < l+m,$$
$$i+j < l+j < l+m,$$
and hence $N < M$.

The existence of such a stable presentation shows that $\hat{R}\in\RSPS^{n^2}_k(k)$.
\end{ex}

\subsection{Non-examples}\label{subsection: non-exs}

Showing that an extension is \emph{not} rigid appears to involve lots of tedious calculation, but we give two examples which seem of interest to the theory.

\begin{nonex}\label{nonex: pure aut}
\textbf{An extension of pure automorphic type.} Let $k$ be a field such that $\sqrt{-1}\not\in k$, and let $S = k[[Y,Z]][[X;\sigma]]$, where $\sigma$ is a $k$-linear automorphism of the commutative power series ring $k[[Y,Z]]$ defined by $\sigma(Y) = Z$, $\sigma(Z) = -Y$: that is, $ZX = XY$ and $YX = -XZ$ inside $S$.

By construction, $S\in\SPS_k^3(k)$.

Suppose that $S$ is rigid, and denote its maximal ideal by $\mathfrak{n}$. Then there must exist a stable presentation
$$k \lneq (R_1,\mathfrak{m}_1) \lneq (R_2,\mathfrak{m}_2) \lneq (S,\mathfrak{n})$$
for $S$. We will show that no such stable presentation can exist. Write $I = \mathfrak{m}_1S$ and $J = \mathfrak{m}_2S$: then, by Lemma \ref{lem: SPS extensions are local}(i), $I = sS$ and $J = sS + tS$ with $s, t\in\mathfrak{n}$, and $s,t\not\in\mathfrak{n}^2$ by Remark \ref{rks: rank, gr, known properties}(iii).

For computation purposes, we will write
\begin{itemize}[noitemsep]
\item $s = aX + bY + cZ + \varepsilon$,
\item $t = \alpha X+\beta Y+\gamma Z+\varepsilon'$,
\end{itemize}
where $\varepsilon,\varepsilon'\in\mathfrak{n}^2$, and $a, b, c, \alpha, \beta, \gamma\in k$ are constants, which must satisfy $(a, b, c) \neq (0,0,0) \neq (\alpha,\beta,\gamma)$ by Remark \ref{rks: rank, gr, known properties}(iii).

As $S$ is rigid, $I = \mathfrak{m}_1S$ must in fact be a two-sided ideal. This places some restrictions on possible choices for $s$, which we now compute.

Henceforth, we work in $S/\mathfrak{n}^3$ for ease of computation. We have
$$Xs \equiv aX^2 + bXY + cXZ,\qquad sX \equiv aX^2 + cXY - bXZ\mod\mathfrak{n}^3,$$
and so
$$Xs - sX = (b-c)XY + (b+c)XZ\mod\mathfrak{n}^3.$$
But $Xs-sX\in I = sS$,  so we must have $(b-c)XY + (c+b)XZ \equiv s\alpha \mod\mathfrak{n}^3$ for some $\alpha\in S$. As the left-hand side belongs to $\mathfrak{n}^2$, we must have $\alpha\in\mathfrak{n}$, and so $\varepsilon\alpha\in\mathfrak{n}^3$. Writing therefore $\alpha \equiv d_XX + d_YY + d_ZZ\mod\mathfrak{n}^2$ for some $d_X, d_Y, d_Z\in k$, we see that
$$(b-c)XY + (b+c)XZ \equiv (aX + bY + cZ)(d_XX + d_YY + d_ZZ)\mod\mathfrak{n}^3.$$
Multiplying out the right-hand side:
\begin{align*}
(b-c)XY + (b+c)XZ &\equiv ad_XX^2 + (ad_Y+cd_X)XY + (ad_Z-bd_X)XZ\\
&\qquad + bd_YY^2 + (bd_Z+cd_Y)YZ + cd_ZZ^2  &\mod\mathfrak{n}^3.
\end{align*}
Now, equating the coefficients of each monomial on both sides, some tedious case-checking shows that the only solution to this congruence is $b=c=d_X=d_Y=d_Z=0$.

Hence we have $s = aX + \varepsilon$, so that $a\neq 0$.

Now, as $S$ is rigid, $J = \mathfrak{m}_2S$ must also be a two-sided ideal, and so we calculate the restrictions that this places on $t$.

As $R_1\in\SPS_k(k)$, it must be a \emph{commutative} power series ring over $k$, so that $R_1 \cong k[[s]]$. Then $R_2 = k[[s]][[t;\tau,\delta]]$ for some local skew derivation ($\tau,\delta$) of $k[[s]]$, and an easy calculation shows that there must be a unit $\eta\in k[[s]]^\times$ such that $\tau(s) = \eta s$, and there must be an element $\theta\in k[[s]]$ such that $\delta(s) = \theta s^2$. (Note that $\eta\not\equiv 0 \mod\mathfrak{n}^2$.) Hence we have $\tau(s) \equiv \eta s \equiv \eta aX \mod \mathfrak{n}^2$ and $\delta(s)\equiv \theta s^2 \equiv a^2\theta X^2\mod\mathfrak{n}^3$, and so
$$(\alpha X + \beta Y + \gamma Z)\eta aX \equiv \eta aX(\alpha X + \beta Y + \gamma Z) + a^2\theta X^2 \mod \mathfrak{n}^3.$$

Multiplying out again:
$$a^2\theta X^2 + \eta a (\beta-\gamma)XY + \eta a(\beta+\gamma)XZ \equiv 0 \mod \mathfrak{n}^3,$$
from which we see immediately that $\beta=\gamma=\theta=0$, and hence $t = \alpha X + \varepsilon'$.

This implies that the images of $s$ and $t$ in $\mathfrak{n}/\mathfrak{n}^2$, and hence also in $\mathfrak{m}_2/\mathfrak{m}_2^2$, are linearly dependent, and so $\dim_k(\mathfrak{m}_2/\mathfrak{m}_2^2) \leq 1$. But $R_2\in\SPS^2_k(k)$ by construction, and Remark \ref{rks: rank, gr, known properties}(i) tells us that we should have $\dim_k(\mathfrak{m}_2/\mathfrak{m}_2^2) = 2$. This is a contradiction.
\end{nonex}

\begin{nonex}\label{nonex: non-rigid soluble IAs}
\textbf{A soluble Iwasawa algebra.}

This example is similar to the previous in many ways. Compare also Example \ref{ex: rigid soluble IAs}(ii).

Fix a prime $p$ congruent to $3$ mod $4$, so that $\sqrt{-1}\not\in\mathbb{F}_p$. Let $A = \overline{\langle y, z\rangle} \cong \mathbb{Z}_p^2$ and $B = \overline{\langle x\rangle} \cong \mathbb{Z}_p$, and form the semidirect product $G = B\rtimes A$ as in Example \ref{ex: rigid soluble IAs}(ii). Construct its $\mathbb{F}_p$-Iwasawa algebra $S = \mathbb{F}_pG$.

Writing $X = x-1, Y = y-1, Z = z-1$, we may easily calculate (see e.g. \cite[Example 2.3]{venjakob}) that
$$S = \mathbb{F}_p[[Y, Z]][[X; \sigma,\delta]],$$
where $\sigma(Y) = Y + (1+Y)Z^p$, $\sigma(Z) = Z + (-Y+Y^2-Y^3+\dots)^p(1+Z)$, and $\delta = \sigma-\mathrm{id}$. Clearly, $S\in\SPS^3_{\mathbb{F}_p}(\mathbb{F}_p)$.

Suppose that $S$ is rigid, and denote its maximal ideal by $\mathfrak{n}$. Then there must exist a stable presentation
$$\mathbb{F}_p \lneq (R_1,\mathfrak{m}_1) \lneq (R_2,\mathfrak{m}_2) \lneq S$$
for $S$: in particular, by Proposition \ref{propn: quotient extensions}, $S$ must admit a quotient $S/\mathfrak{m}_1S\in\RSPS^2_{\mathbb{F}_p}(\mathbb{F}_p)$. We will show that this leads to a contradiction.

As in Non-example \ref{nonex: pure aut}, take $I = \mathfrak{m}_1 S = sS$, and write $s = aX + bY + cZ + \varepsilon$, where $\varepsilon\in\mathfrak{n}^2$, and $a, b, c\in\mathbb{F}_p$. By rigidity of $S$, $I$ must be a two-sided ideal; this time, we calculate restrictions on $s$ by working in $S/\mathfrak{n}^{p+1}$. We have
$$\sigma(Y) \equiv Y + Z^p , \qquad \sigma(Z)\equiv Z -Y^p, \mod \mathfrak{n}^{p+1},$$
and so we must have
$$Xs - sX \equiv bZ^p - cY^p \mod \mathfrak{n}^{p+1}.$$
But $Xs-sX\in I = sS$, so we must have $bZ^p - cY^p \equiv s\alpha \mod \mathfrak{n}^{p+1}$ for some $\alpha\in S$. As the left-hand side belongs to $\mathfrak{n}^p$, we must have $\alpha\in\mathfrak{n}^{p-1}$, and so $\varepsilon\alpha\in\mathfrak{n}^{p+1}$. So this equation becomes
$$bZ^p - cY^p \equiv (aX + bY + cZ)\left(\sum_{\gamma} d_\gamma X^{\gamma_1} Y^{\gamma_2} Z^{\gamma_3}\right) \mod \mathfrak{n}^{p+1},$$
for some choices of $d_\gamma\in \mathbb{F}_p$, where this sum ranges over all $\gamma = (\gamma_1, \gamma_2, \gamma_3)$ with $p-1 \leq \gamma_1+\gamma_2+\gamma_3 \leq p$.

Suppose that, for our given $a,b,c$, we have a solution $\{d_\gamma\}$ to this congruence. Then, multiplying out the right-hand side, and writing $\mathbf{e}_1 = (1,0,0), \mathbf{e}_2 = (0,1,0), \mathbf{e}_3 = (0,0,1)$:
$$bZ^p - cY^p \equiv \sum_{\gamma'} \left(ad_{\gamma' - \mathbf{e}_1} + bd_{\gamma' - \mathbf{e}_2} + cd_{\gamma' - \mathbf{e}_3}\right) X^{\gamma'_1} Y^{\gamma'_2} Z^{\gamma'_3} \mod \mathfrak{n}^{p+1},$$
where for convenience we set $d_\gamma = 0$ if any of the $\gamma_i$ is equal to $-1$. We may eliminate any term in the sum of total degree not equal to $p$, as all nonzero monomials appearing on the left hand side have degree $p$, so this sum may be taken to range over all $\gamma' = (\gamma'_1, \gamma'_2, \gamma'_3)$ with $\gamma'_1+\gamma'_2+\gamma'_3 = p$; furthermore, as there are no terms in $X^{\gamma'_1} Y^{\gamma'_2} Z^{\gamma'_3}$ on the left hand side except those with $\gamma_1 = 0$, we may also eliminate all monomials of nonzero degree in $X$ on the right. We can now rewrite this congruence as
$$bZ^p - cY^p \equiv \sum_{i=0}^p \left(bd_{(0,i-1,p-i)} + cd_{(0,i,p-i-1)}\right) Y^{i} Z^{p-i} \mod \mathfrak{n}^{p+1}.$$
It is easy to see that, if $b\neq 0$, then $c\neq 0$, and vice-versa. We assume for contradiction that $b\neq 0\neq c$: we will show that, in this case, the above congruence cannot hold for any choice of $\{d_\gamma\}$.

Indeed, equating monomial coefficients on the left and right hand sides:
\begin{align*}
b &= cd_{(0,0,p-1)} && (i=0)\\
0 &= bd_{(0,m-1,p-m)} + cd_{(0,m,p-m-1)} && (i=m : 1\leq m\leq p-1)\\
-c &= bd_{(0,p-1,0)} && (i=p).
\end{align*}
On the one hand, multiplying the equations labelled $(i=0)$ and $(i=p)$ together, we get
$$-bc = bcd_{(0,0,p-1)} d_{(0,p-1,0)},$$
i.e. $d_{(0,0,p-1)} d_{(0,p-1,0)} = -1$. On the other hand, multiplying the equation labelled $(i=m)$ by $b^{p-m} c^m$ and rearranging for each $1\leq m\leq p-1$, we get
$$b^{p-m+1} c^m d_{(0,m-1,p-m)} = -b^{p-m}c^{m+1}d_{(0,m,p-m-1)};$$
substituting each one into the next, we eventually get
$$b^pcd_{(0,0,p-1)} = bc^pd_{(0,p-1,0)}$$
as $p$ is odd; and since $b,c\in\mathbb{F}_p$, we have $b^p = b$ and $c^p = c$. This tells us that $d_{(0,0,p-1)} = d_{(0,p-1,0)}$, and denoting this common value by $d$, we have shown that we must have $d^2 = -1\in\mathbb{F}_p$, which is a contradiction.

Hence we have shown that $b=c=0$, and so $s = aX + \varepsilon$. But now
$$sY - Ys \equiv aZ^p\mod\mathfrak{n}^{p+1},$$
and a very similar (but easier) calculation shows that we must have $a=0$. Hence $s = \varepsilon\in\mathfrak{n}^2$: that is, $\mathfrak{m}_1$ is generated in $\mathfrak{n}$-adic degree $\geq 2$. This contradicts Remark \ref{rks: rank, gr, known properties}(iii).
\end{nonex}

\section{Dimension theory}

Many of the results in this section can be slightly extended; but we will not always strive for full generality, and often work over a field or a division ring for simplicity.

\subsection{Krull dimension}

Let $k$ be a division ring, and $(R,\mathfrak{m})\in \SPS^n(k)$. Remark \ref{rks: rank, gr, known properties}(v)(c) implies that $\Kdim(R) \leq n$: in this subsection, we show that this is always an equality.

For any ring $R$, write $\mathcal{I}_r(R)$ for the lattice of right ideals of $R$.

The following is the result corresponding to \cite[3.14(ii)]{letzter-noeth-skew} in the case when $I$ is \emph{not} a $(\sigma,\delta)$-ideal.

\begin{propn}\label{propn: poset map theta}
Let $R$ be a complete local ring, $S = R[[x;\sigma,\delta]]\in\SPS(R)$, and let $I$ be a right ideal of $R$. Set $I[[x;\sigma,\delta]] := \{\sum a_i x^i: a_i \in I\}$. Then $I[[x;\sigma,\delta]]$ is a right ideal of $S$. Moreover, the map $\theta: \mathcal{I}_r(R) \to \mathcal{I}_r(S)$ sending $I$ to $I[[x;\sigma,\delta]]$ is a strictly increasing poset map.
\end{propn}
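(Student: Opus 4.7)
The plan is to verify directly that $I[[x;\sigma,\delta]]$ is closed under right multiplication by $S$, and then to exhibit an explicit element of $\theta(J) \setminus \theta(I)$ whenever $I \subsetneq J$.

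\textbf{Step 1: $\theta(I)$ is a right ideal.} The set $\theta(I)$ is clearly an additive subgroup of $S$, and is trivially closed under right multiplication by $x$: indeed $\bigl(\sum a_i x^i\bigr)x = \sum a_i x^{i+1}$. The substantive content is closure under right multiplication by elements $b\in R$. The plan is to apply Lemma \ref{lem: multiplication by higher degrees} to compute
\[
\Bigl(\sum_{i\geq 0} a_i x^i\Bigr)\cdot b = \sum_{i\geq 0}\sum_{m\in M_i} a_i\, m(\sigma,\delta)(b)\, x^{e(m)},
\]
and to collect coefficients of each $x^j$. Each summand $a_i\, m(\sigma,\delta)(b)$ lies in $I$, since $I$ is a right ideal of $R$ and $m(\sigma,\delta)(b)\in R$. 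Convergence of the resulting coefficient $\sum_{i\ge j} a_i c_{i,j}(b)$ (where $c_{i,j}(b)$ collects the finitely many contributions from monomials $m\in M_i$ with $e(m)=j$) follows from the local hypothesis on $(\sigma,\delta)$, i.e. Remark \ref{rk: preserving local structure}, together with $a_i\to 0$ in the $\mathfrak{m}$-adic topology. The limit remains in $I$ because right ideals of the complete local (noetherian) ring $R$ are $\mathfrak{m}$-adically closed. Since $S$ is topologically generated over $R$ by $x$, this suffices.

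\textbf{Step 2: $\theta$ is strictly increasing.} Monotonicity is clear from the definition: $I\subseteq J$ implies every power series with coefficients in $I$ has coefficients in $J$. For strictness, suppose $I\subsetneq J$ and fix any $a\in J\setminus I$. Then $a$, viewed as a constant power series, lies in $\theta(J)$; on the other hand, the zeroth coefficient of any element of $\theta(I)$ lies in $I$, so $a\notin\theta(I)$. Hence $\theta(I)\subsetneq\theta(J)$.

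\textbf{Main obstacle.} The only nontrivial point is closure under right multiplication by $R$: one needs to reorganise the formal expansion furnished by Lemma \ref{lem: multiplication by higher degrees} into a genuine power series and verify that each coefficient still lies in $I$. This hinges on the locality of $(\sigma,\delta)$ for convergence and on the closedness of $I$ in the $\mathfrak{m}$-adic topology; once those are in hand, the rest of the argument is essentially bookkeeping.
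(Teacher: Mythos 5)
Your proof is correct and follows essentially the same route as the paper's: both hinge on Lemma \ref{lem: multiplication by higher degrees} to expand $(\sum a_i x^i)b$ and observe that every contribution $a_i\,m(\sigma,\delta)(b)$ lies in $I$, and your strictness argument via the zeroth coefficient is an equivalent variant of the paper's recovery of $I$ from $\theta(I)$. If anything you are more careful than the paper, which silently passes over the point that each coefficient of $x^j$ is an infinite (convergent) sum of elements of $I$ and so requires $I$ to be $\mathfrak{m}$-adically closed.
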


\begin{proof}
$I[[x;\sigma,\delta]]$ is clearly an additive subgroup of $S$.

Take $r\in R$ and $a = \sum_{i=0}^\infty a_i x^i\in I[[x;\sigma,\delta]]$. Then we may evaluate $ar$ inside $S$:
\begin{align*}
ar &= \sum_{i=0}^\infty a_i x^i r\\
&= \sum_{i=0}^\infty \sum_{m\in M_i} a_i m(\sigma,\delta)(r) x^{e(m)}
\end{align*}
in the notation of Lemma \ref{lem: multiplication by higher degrees}. Now, $m(\sigma,\delta)(r)\in R$, so $a_i m(\sigma,\delta)(r)\in I$, and hence $ar\in I[[x;\sigma,\delta]]$.

We need to check that this gives a well-defined right $S$-action on $I[[x;\sigma,\delta]]$ -- in other words, that the right actions of the elements $xr$ and $\sigma(r)x + \delta(r)$ agree for all $r\in R$. But this is already true \emph{a fortiori}, as the right action of $S$ on $I[[x;\sigma,\delta]]$ is just induced by multiplication inside the ring $S$.

Finally, it is clear that, if $I_1\leq I_2$, then $\theta(I_1)\leq \theta(I_2)$; and, if $J = \theta(I)$, then we may recover $I$ as $J/Jx$. This shows that $\theta$ is a strict map of posets.
\end{proof}

\begin{lem}\label{lem: induced simple module is k[[x]]}
Let $(R,\mathfrak{m})$ be a complete local ring with residue ring $k$. Let $Y\subseteq X$ be adjacent right ideals of $R$: then, as right $S$-modules, $\theta(X)/\theta(Y)$ is isomorphic to $S/\mathfrak{m}S \cong k[[\overline{x}; \overline{\sigma}]]$, a skew power series ring of automorphic type.
\end{lem}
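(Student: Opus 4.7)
The plan is as follows. Since the subsection fixes $k$ to be a division ring, Lemma \ref{lem: SPS extensions are local}(ii) ensures that $(R,\mathfrak{m})$ is scalar local with $R/\mathfrak{m}\cong k$. The simple right $R$-module $X/Y$ is then annihilated by the Jacobson radical $\mathfrak{m}$, and so is a simple right $k$-module, hence isomorphic to $k\cong R/\mathfrak{m}$ as a right $R$-module. Fix any $y\in X$ whose image generates $X/Y$: then $X = yR + Y$ and $\{r\in R : yr\in Y\} = \mathfrak{m}$.

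I would exhibit the isomorphism directly via the right $S$-module map
\[\phi : S \longrightarrow \theta(X)/\theta(Y), \qquad s \mapsto ys + \theta(Y),\]
and identify its kernel as $\mathfrak{m}S$. Surjectivity is clear: given $\sum a_i x^i\in\theta(X)$, write $a_i = y r_i + y'_i$ with $r_i\in R$ and $y'_i\in Y$; then $\phi\bigl(\sum r_i x^i\bigr) = \sum a_i x^i + \theta(Y)$. For the kernel, note that left multiplication by an element of $R$ acts termwise on power series, so if $s = \sum s_i x^i$ then $ys = \sum (y s_i)x^i$, which lies in $\theta(Y)$ exactly when every $s_i\in\mathfrak{m}$, i.e.\ when $s\in\mathfrak{m}[[x;\sigma,\delta]]$.

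The last step is to identify this kernel with the two-sided ideal $\mathfrak{m}S$ and then recognise the quotient. Both come from Lemma \ref{lem: quotient by ideals}: the ideal $\mathfrak{m}$ is a $(\sigma,\delta)$-ideal thanks to the local hypotheses $\sigma(\mathfrak{m})\subseteq\mathfrak{m}$ and $\delta(\mathfrak{m})\subseteq\mathfrak{m}^2$, so $\mathfrak{m}S = \mathfrak{m}[[x;\sigma,\delta]]$ and $S/\mathfrak{m}S \cong (R/\mathfrak{m})[[\overline{x};\overline{\sigma},\overline{\delta}]]$; the condition $\delta(R)\subseteq\mathfrak{m}$ then forces the induced $\overline{\delta}$ on $R/\mathfrak{m}$ to vanish, yielding $S/\mathfrak{m}S\cong k[[\overline{x};\overline{\sigma}]]$, a skew power series ring of automorphic type. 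The first isomorphism theorem applied to $\phi$ then gives $\theta(X)/\theta(Y) \cong S/\mathfrak{m}S$, completing the argument.

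The only subtle point is the identification $\mathfrak{m}[[x;\sigma,\delta]] = \mathfrak{m}S$ — the former defined by a coefficient condition, the latter as a product of a left ideal with $S$ — but this is cleanly packaged by Lemma \ref{lem: quotient by ideals}, so the proof is not expected to present any genuine obstacle.
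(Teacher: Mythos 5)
Your argument is correct, but it takes a genuinely different route from the paper's. The paper works coefficient-by-coefficient: it notes $X/Y\cong R/\mathfrak{m}\cong k$, observes that $\sum a_ix^i\mapsto\sum\overline{a_i}\,\overline{x}^i$ is the obvious bijection of abelian groups onto $k[[\overline{x};\overline{\sigma}]]$, and then verifies the right action directly, the key step being that in the expansion of $\left(\sum a_ix^i\right)r$ from Lemma \ref{lem: multiplication by higher degrees} every monomial containing an instance of $\delta$ lands in $X\mathfrak{m}\subseteq Y$ because $\delta(R)\subseteq\mathfrak{m}$. You instead exhibit $\theta(X)/\theta(Y)$ as a cyclic right $S$-module: left multiplication by a lift $y$ of a generator of $X/Y$ gives a surjection $S\to\theta(X)/\theta(Y)$ with kernel $\theta(\mathfrak{m})=\mathfrak{m}S$, and you delegate the identification $S/\mathfrak{m}S\cong k[[\overline{x};\overline{\sigma}]]$ (including the vanishing of the induced derivation, again via $\delta(R)\subseteq\mathfrak{m}$) to Lemma \ref{lem: quotient by ideals}. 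Both proofs rest on the same two facts, namely $X/Y\cong R/\mathfrak{m}$ and $\delta(R)\subseteq\mathfrak{m}$, and both implicitly use the standing assumption that $k$ is a division ring so that $\mathfrak{m}$ is the unique maximal right ideal; yours has the advantage of actually producing the intermediate isomorphism with $S/\mathfrak{m}S$ asserted in the statement and of outsourcing the $\delta$-bookkeeping, while the paper's is self-contained and never needs the equality $\mathfrak{m}S=\mathfrak{m}[[x;\sigma,\delta]]$, which is the one point where your argument genuinely leans on the external input behind Lemma \ref{lem: quotient by ideals}. You rightly flag that point, and it does hold, so there is no gap.
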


\begin{proof}
Let $\theta$ continue to denote the map $\mathcal{I}_r(R)\to \mathcal{I}_r(S)$ defined in Proposition \ref{propn: poset map theta}.

$Y\subset X$ are adjacent if and only if $X/Y$ is a simple right module. In particular, the annihilator $\mathrm{ann}(X/Y)_R$ must be the unique maximal right ideal $\mathfrak{m}$ of $R$, and so we must have $X/Y\cong R/\mathfrak{m} \cong k$.

The proposed isomorphism is obvious on the level of abelian groups, and it is easy to see that right multiplication by $x\in S$ is the same as right multiplication by $\overline{x}\in k[[\overline{x};\overline{\sigma}]]$. It remains to check the $R$-action. Recall, from Lemma \ref{lem: multiplication by higher degrees}, that
$$\left(\sum_{i=0}^\infty a_i x^i\right) r = \sum_{i=0}^\infty \sum_{m\in M_i} a_i m(\sigma,\delta)(r) x^{e(m)}$$
for all $a_i\in X, r\in R$. But, for a given $m\in M_i$, if $m(\sigma,\delta)$ contains an instance of $\delta$ (i.e. if $e(m) < i$), then $m(\sigma,\delta)(r) \in \mathfrak{m}$, and hence $a_i m(\sigma,\delta)(r) \in X\mathfrak{m} \subseteq Y$. This implies that
$$\left(\sum_{i=0}^\infty \overline{a_i x^i}\right) \overline{r} = \sum_{i=0}^\infty \overline{a_i \sigma^i(r) x^i}$$
inside $X/Y$, as required.
\end{proof}

\begin{thm}\label{thm: Kdim}
Let $k$ be a division ring, and $S\in\SPS^n(k)$. Then $\Kdim(S) = \Kdim(\gr(S)) = n$.
\end{thm}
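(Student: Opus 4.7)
My plan is to establish the chain
$$n \leq \Kdim(S) \leq \Kdim(\gr(S)) \leq n,$$
which forces all three quantities to coincide.

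The upper bound is the easier half. Iterating Lemma~\ref{lem: isom of graded rings}, the associated graded ring $\gr(S)$ is an $n$-fold iterated skew polynomial ring over the division ring $k$, of the form $k[z_1;\overline{\sigma_1},\overline{\delta_1}]\cdots[z_n;\overline{\sigma_n},\overline{\delta_n}]$. Since each skew polynomial extension of a noetherian ring increases Krull dimension by at most one (see e.g.\ \cite[6.5.4]{MR}), iterating yields $\Kdim(\gr(S)) \leq n$. The inequality $\Kdim(S) \leq \Kdim(\gr(S))$ is standard for complete positively filtered rings (see e.g.\ \cite[Ch.~I]{MR} or \cite[Ch.~II]{LVO}).

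For the lower bound I would argue by induction on $n$. The base case $n = 0$ is trivial since $S = k$ is a division ring. For the inductive step, write $S = R[[x;\sigma,\delta]]$ with $R \in \SPS^{n-1}(k)$, so that by the inductive hypothesis $\Kdim(R) \geq n-1$. I then invoke the strictly order-preserving poset map $\theta: \mathcal{I}_r(R) \to \mathcal{I}_r(S)$ from Proposition~\ref{propn: poset map theta} together with Lemma~\ref{lem: induced simple module is k[[x]]}: whenever $Y \subsetneq X$ is an adjacent pair in $\mathcal{I}_r(R)$, the lifted pair $\theta(Y) \subsetneq \theta(X)$ has quotient $\theta(X)/\theta(Y) \cong k[[\overline{x}; \overline{\sigma}]]$ as a right $S$-module, which contains the infinite strictly descending chain $\overline{x}^0 S \supsetneq \overline{x} S \supsetneq \overline{x}^2 S \supsetneq \cdots$. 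Hence this $S$-quotient has Krull dimension at least $1$.

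The final step is then the standard lattice-theoretic observation (cf.\ \cite[Lemma 6.2.6 and the proof of 6.5.4]{MR}) that a strictly order-preserving map between submodule lattices which sends each adjacent jump in the domain to a jump of Krull dimension $\geq 1$ in the codomain must strictly increase Krull dimension. This gives $\Kdim(S) \geq \Kdim(R) + 1 \geq n$, completing the induction. The main subtlety is verifying that, even though right multiplication by elements of $R$ on $\theta(X)/\theta(Y)$ is complicated by the $\delta$-terms coming from Lemma~\ref{lem: multiplication by higher degrees}, those extra terms land in $X\mathfrak{m} \subseteq Y$ and therefore vanish modulo $\theta(Y)$; this is exactly the content of Lemma~\ref{lem: induced simple module is k[[x]]}, so given that lemma the argument reduces to the classical one for pure-automorphic skew polynomial extensions.
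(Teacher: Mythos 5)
Your proposal is correct and follows essentially the same route as the paper: induction on $n$, the poset map $\theta$ of Proposition~\ref{propn: poset map theta} together with Lemma~\ref{lem: induced simple module is k[[x]]} for the lower bound (the lattice-theoretic step you describe is exactly the lemma of Ardakov that the paper cites), and a filtered-to-graded transfer for the upper bound. The only cosmetic difference is that you bound $\Kdim(\gr(S))\leq n$ via the general inequality for skew polynomial extensions with possibly nonzero $\overline{\delta}$ and recover equality from the sandwich, whereas the paper invokes Wang's computation of $\gr(S)$ as an automorphic-type extension directly; both are sound.
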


\begin{proof}
The proof of this theorem closely follows some of the methods of \cite{ardakov-Kdim}. We will calculate the \emph{right} Krull dimension of $S$, but the calculation of the left Krull dimension is identical. Let $\theta$ continue to denote the map $\mathcal{I}_r(R)\to \mathcal{I}_r(S)$ of Proposition \ref{propn: poset map theta}.

When $n = 0$, we have $S = \gr(S) = k$, and there is nothing to prove. We proceed by induction on the rank of $S$.

Let $R\in\SPS^{n-1}(k)$, with $S = R[[x;\sigma,\delta]]\in\SPS(R)$. By the inductive hypothesis, we know that $$\Kdim(R) = \Kdim(\gr(R)) = n-1.$$ We also know, by \cite[Corollary 2.9(ii)]{wang-quantum}, that $\gr(S) \cong \gr(R)[\overline{x};\overline{\sigma}]$, a skew polynomial ring of automorphic type, and so $\Kdim(\gr(S)) = \Kdim(\gr(R)) + 1 = n$ by \cite[Proposition 6.5.4(i)]{MR}.

Now, given arbitrary adjacent right ideals $Y\subseteq X$ of $R$, we know already from Lemma \ref{lem: induced simple module is k[[x]]} that $\theta(X)/\theta(Y) \cong k[[\overline{x};\overline{\sigma}]]$ as right $S$-modules: in particular, $\theta(X)/\theta(Y)$ is not artinian as a right $S$-module, so we must have $\Kdim_S(\theta(X)/\theta(Y)) \geq 1$. Now applying \cite[Lemma 2.3]{ardakov-Kdim}, we see that $n = \Kdim(R) + 1 \leq \Kdim(S)$.

Together with the inequality of \cite[Corollary 2.9(iv)]{wang-quantum}, we see that
$$n \leq \Kdim(S) \leq \Kdim(\gr(R)) + 1 = n,$$
and the result follows.
\end{proof}

\subsection{Classical Krull dimension}

\begin{thm}\label{thm: clKdim}
Let $k$ be a division ring, and $R\in \RSPS^n(k)$. Then $\clKdim(R) = n$.
\end{thm}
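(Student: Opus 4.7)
The plan is to match the upper bound $\clKdim(R) \leq \Kdim(R) = n$---available from Theorem~\ref{thm: Kdim} together with the noetherianity of $R$ (a consequence of the bulleted results of \S\ref{subsection: lifting}, since $\gr(R)$ is an iterated Ore extension over the division ring $k$)---by explicitly constructing a chain of $n+1$ prime ideals of $R$.

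Fix a stable presentation
$$k = R_0 \lneq R_1 \lneq \dots \lneq R_n = R$$
with $\mathfrak{m}_i$ the maximal ideal of $R_i$, and set $P_i := \mathfrak{m}_i R$, so that $P_0 = 0$ and $P_n = \mathfrak{m}_n$. By the stability hypothesis, $\mathfrak{m}_i$ is a $(\sigma_j,\delta_j)$-ideal of $R_i$ for every $i < j \leq n$; so Proposition~\ref{propn: quotient by ideals}, applied with $R\in\SPS^{n-i}(R_i)$ and $I = \mathfrak{m}_i$, guarantees that each $P_i$ is two-sided, with
$$R/P_i \;\cong\; k[[x_{i+1},\dots,x_n;\bm{\sigma},\bm{\delta}]] \;\in\; \SPS^{n-i}(k),$$
after iterating Lemma~\ref{lem: SPS extensions are local}(i) to identify $R_i/\mathfrak{m}_i$ with $k$.

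To verify primality of each $P_i$, I would apply Proposition~\ref{propn: primality}: it suffices that $\gr(R/P_i)$ be prime. Iterating Lemma~\ref{lem: isom of graded rings},
$$\gr(R/P_i) \;\cong\; k[X_{i+1}; \overline{\sigma_{i+1}}, \overline{\delta_{i+1}}] \cdots [X_n; \overline{\sigma_n}, \overline{\delta_n}],$$
an iterated Ore extension of the division ring $k$ in which each endomorphism $\overline{\sigma_j}$ is an automorphism; standard Ore extension theory then identifies this as a domain, hence prime. Strictness $P_i \subsetneq P_{i+1}$ for $0 \leq i < n$ follows from Remarks~\ref{rks: rank, gr, known properties}(iii) applied to the rank-$(n-i)$ extension $R/P_i \in \SPS^{n-i}(k)$: the image of $x_{i+1}$ in $R/P_i$ lies in $P_{i+1}/P_i$ and is nonzero (indeed, it has $\mathfrak{n}$-adic degree exactly $1$, where $\mathfrak{n}$ is the maximal ideal of $R/P_i$).

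The main obstacle, and precisely the reason rigidity is indispensable here, is establishing the two-sidedness of the $P_i$: without stability, $\mathfrak{m}_i$ need not be preserved by the higher $(\sigma_j,\delta_j)$, so $\mathfrak{m}_i R$ may fail to be a two-sided ideal of $R$ at all, and the naïve chain collapses. This is precisely the phenomenon behind Non-examples~\ref{nonex: pure aut} and~\ref{nonex: non-rigid soluble IAs}.
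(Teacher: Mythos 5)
Your proposal is correct and follows essentially the same route as the paper: the upper bound via $\clKdim(R)\leq\Kdim(R)=n$ (Theorem~\ref{thm: Kdim}), and the lower bound via the chain $\{\mathfrak{m}_iR\}_{i=0}^n$ coming from a stable presentation, with two-sidedness and the identification $R/\mathfrak{m}_iR\in\SPS^{n-i}(k)$ handled by Proposition~\ref{propn: quotient extensions} (which packages your appeal to Proposition~\ref{propn: quotient by ideals}) and primality by Proposition~\ref{propn: primality}. Your additional remarks on strictness of the chain and on why the associated graded rings of the quotients are prime merely make explicit what the paper leaves implicit.
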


\begin{proof}
It is well known that $\clKdim(R) \leq \Kdim(R)$ (e.g. \cite[Lemma 6.4.5]{MR}), and here $\Kdim(R) = n$ by Theorem \ref{thm: Kdim}. To obtain a lower bound, consider the chain of ideals $\{\mathfrak{m}_i R\}_{i=0}^n$ given in Proposition \ref{propn: quotient extensions}. This chain has length $n$, and again by Proposition \ref{propn: quotient extensions}, the quotient rings are iterated local skew power series rings over $k$, and are hence prime by Proposition \ref{propn: primality}. Hence $\clKdim(R)\geq n$.
\end{proof}

\begin{rk}
Rigidity is a sufficient, but not a necessary, condition to have $\clKdim(R) = \Kdim(R)$, as shown by the following proposition together with Non-example \ref{nonex: pure aut}.
\end{rk}

\begin{propn}\label{propn: clKdim}
Let $k$ be a division ring, and let $R \in\SPS^n(k)$ have \emph{pure automorphic} type. Then $\clKdim(R) = n$.
\end{propn}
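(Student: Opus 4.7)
The plan is to induct on $n$ and exploit the fact that in the pure automorphic setting the top variable $x_n$ is automatically normal in $R$, with no rigidity of the presentation required. The base case $n=0$ is immediate since $R=k$ is a division ring. The upper bound $\clKdim(R) \leq n$ follows at once from Theorem \ref{thm: Kdim} together with the standard inequality $\clKdim(R) \leq \Kdim(R)$, e.g.\ \cite[Lemma 6.4.5]{MR}. Hence the real content is to construct a strict chain $P_0 \subsetneq P_1 \subsetneq \cdots \subsetneq P_n$ of prime ideals of $R$.

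For the inductive step, write $R = R_{n-1}[[x_n;\sigma_n]]$ with $R_{n-1}\in\SPS^{n-1}(k)$ again of pure automorphic type. Since $\delta_n = 0$, the multiplication rule (\ref{eqn: multiplication}) reduces to $x_n r = \sigma_n(r)x_n$ for all $r\in R_{n-1}$, so that $x_n R_{n-1} = R_{n-1}x_n$ and hence $x_n R = R x_n$ is a two-sided ideal of $R$. Projection onto the constant coefficient is a surjective ring homomorphism $R\twoheadrightarrow R_{n-1}$ with kernel $x_n R$, yielding $R/x_n R\cong R_{n-1}$. The inductive hypothesis gives a strict chain of prime ideals $Q_0\subsetneq Q_1\subsetneq \cdots \subsetneq Q_{n-1}$ in $R_{n-1}$; pulling this back through the quotient map produces a strict chain $P_1\subsetneq P_2\subsetneq \cdots \subsetneq P_n$ of prime ideals of $R$, each containing $x_n R$ (with $P_1$ the preimage of $Q_0$, so in particular one may take $P_1 = x_n R$ when $Q_0 = 0$).

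It remains to prepend $P_0 = 0$, for which I need $R$ to be prime. I would deduce this from Proposition \ref{propn: primality}: by Lemma \ref{lem: isom of graded rings}, applied iteratively (noting $\overline{\delta_i}=0$ since each $\delta_i=0$), the associated graded ring $\gr(R)$ is an ordinary iterated skew polynomial ring $k[X_1;\overline{\sigma_1}]\cdots[X_n;\overline{\sigma_n}]$ of pure automorphic type, which is prime by iterated application of \cite[Theorem 1.2.9(iii)]{MR}; Proposition \ref{propn: primality} then yields primality of $R$. Since $x_n\ne 0$, the inclusion $0\subsetneq x_n R = P_1$ is strict, completing the chain to length $n$ and so giving $\clKdim(R)\geq n$. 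The main thing to appreciate is conceptual rather than computational: pure automorphic type delivers normality of the top variable $x_n$ in $R$ for free, so no stable presentation is needed, and a single-step quotient suffices at each level of the induction (one cannot in general continue the quotient chain further inside $R$ itself, since $x_{n-1}$ need not normalise $R$).
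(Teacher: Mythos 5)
Your proof is correct and follows essentially the same route as the paper: induct on $n$, observe that $x_nR=Rx_n$ is a two-sided ideal with $R/x_nR\cong R_{n-1}$, and combine the pulled-back chain with the upper bound $\clKdim(R)\leq\Kdim(R)=n$. The paper's proof is just a terser version of this; you additionally make explicit the primality of $R$ (via Proposition \ref{propn: primality}) needed to prepend $P_0=0$, a step the paper leaves implicit.
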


\begin{proof}
Suppose $R = k[[x_1;\sigma_1]][[x_2; \sigma_2]] \dots [[x_n;\sigma_n]]$. Note that $(x_n)$ is a two-sided ideal of $R$, and
$$R/(x_n) \cong k[[x_1;\sigma_1]][[x_2; \sigma_2]] \dots [[x_{n-1};\sigma_{n-1}]].$$
The result follows by induction on $n$.
\end{proof}

It is likely that, for many $R\in\SPS^n(k)$, we have $\clKdim(R) < n$. Of particular interest is the case where $k = \mathbb{F}_p$ or $k = \mathbb{Z}_p$, and $R=kG$ is the completed group algebra of a soluble $p$-valuable group $G$ over $k$. Based on the corresponding result for polycyclic group algebras \cite[Theorem H1]{roseblade}, we can ask the following question:

\begin{qn}
Is $\clKdim(\mathbb{F}_pG)$ equal to the plinth length of $G$?
\end{qn}

By Example \ref{ex: rigid soluble IAs}, this is true in the case when $G$ is supersoluble.

\subsection{Global dimension}

\begin{thm}\label{thm: Exts}
Let $R$ be a complete local ring and $S\in\SPS(R)$. Then $\Ext_R^i(M,R) \cong \Ext_S^{i+1}(M,S)$ as abelian groups for all $i$ and all left $S$-modules $M$ that are finitely generated as left $R$-modules.
\end{thm}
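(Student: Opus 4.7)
The approach is a Rees-type dimension shift mirroring the classical argument for (skew) polynomial extensions (as in \cite{MR}), adapted to the power series setting. The heart of the proof is to construct a short exact sequence of left $S$-modules of the form
\[
0 \longrightarrow S \otimes_R M \xrightarrow{\ \partial\ } S \otimes_R M \xrightarrow{\ \mu\ } M \longrightarrow 0, \qquad (\dagger)
\]
where $\mu(s \otimes m) = sm$ is multiplication and $\partial$ is a ``twisted Koszul'' differential whose image is $\ker\mu$. The finite generation of $M$ over the Noetherian local ring $R$ ensures that $S \otimes_R M$ is well-behaved as a left $S$-module (the tensor commutes with the product $S = \prod_{n \geq 0} Rx^n$ for finitely presented $M$). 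The naive formula $\partial_0(s \otimes m) = sx \otimes m - s \otimes xm$ does \emph{not} descend through the $R$-tensor in the skew case, because moving $a\in R$ across the tensor produces a residual term proportional to $\partial_0(s(a - \sigma(a)) \otimes m)$ arising from $[x,a] = (\sigma(a) - a)x + \delta(a)$; so $\partial$ must be corrected by twist terms involving $\sigma$ and $\delta$ (alternatively, one may replace the source of $(\dagger)$ by a $\sigma$-twisted tensor product). Injectivity of $\partial$ will follow from $\mathfrak{n}$-adic completeness of $S$ together with finite $R$-generation of $M$, via a Nakayama-type argument giving $\bigcap_n x^n M = 0$ on $M$.

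With $(\dagger)$ in hand, the rest is a standard dimension shift. I would take a projective resolution $P_\bullet \to M$ of $M$ as a left $R$-module by finitely generated projectives. Lemma~\ref{lem: faithful flatness} provides faithful flatness of $S$ over $R$, so $S \otimes_R P_\bullet$ is a projective $S$-resolution of $S \otimes_R M$; splicing with $(\dagger)$ yields a projective $S$-resolution of $M$. Applying $\Hom_S(-,S)$ and the tensor--hom adjunction $\Hom_S(S \otimes_R P_i, S) \cong \Hom_R(P_i, S)$ extracts a long exact sequence
\[
\cdots \longrightarrow \Ext_R^i(M,S) \xrightarrow{\ \partial^\ast\ } \Ext_R^i(M,S) \longrightarrow \Ext_S^{i+1}(M,S) \longrightarrow \Ext_R^{i+1}(M,S) \longrightarrow \cdots.
\]
Since $M$ is finitely generated over the Noetherian ring $R$, the functor $\Ext_R^i(M,-)$ commutes with the product decomposition $S = \prod_n Rx^n$, yielding $\Ext_R^i(M,S) \cong \prod_n \Ext_R^i(M,R)$. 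In these coordinates $\partial^\ast$ takes the shape ``shift plus lower-order correction''; a descent argument down the index $n$ shows its kernel is zero, and its cokernel is canonically the single copy of $\Ext_R^i(M,R)$ indexed by $n = 0$. This produces the desired isomorphism $\Ext_S^{i+1}(M,S) \cong \Ext_R^i(M,R)$.

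The main obstacle is setting up $\partial$ correctly: the skew commutation rule makes the naive Koszul formula fail to descend over $R$, so one must introduce either a twisted tensor product or explicit $\sigma$-$\delta$ correction terms, and then verify both exactness of $(\dagger)$ and the identification of $\mathrm{coker}(\partial^\ast)$ with $\Ext_R^i(M,R)$. Both verifications rely essentially on the $\mathfrak{n}$-adic completeness of $S$ and the finite $R$-generation of $M$, and this is where the power series setting diverges most significantly from the more familiar polynomial case.
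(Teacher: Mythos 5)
Your proposal is correct in outline and follows essentially the same route as the paper, whose proof is precisely a citation of the Rinehart--Rosenberg-style dimension shift as carried out in \cite[Theorem 3.1]{schneider-venjakob-codim}: a ($\sigma$-twisted) Koszul-type short exact sequence $0 \to S\otimes_R M \to S\otimes_R M \to M \to 0$, spliced with a base-changed projective resolution, followed by analysis of the resulting long exact sequence using $S \cong \prod_n Rx^n$. The technical points you flag (the twist needed for $R$-balancedness, injectivity of $\partial$ via completeness, and the identification of $\mathrm{coker}(\partial^\ast)$) are exactly the ``minor adjustments'' the paper defers to that reference.
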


\begin{proof}
This follows from a similar argument to \cite{rinehart-rosenberg}, after minor adjustments -- see e.g. \cite[Theorem 3.1]{schneider-venjakob-codim}.
\end{proof}

\begin{cor}
Let $k$ be a field, $R\in\SPS^n(k)$, and $S = R[[x;\sigma,\delta]]\in\SPS(R)$. Then $\gldim(S) = \gldim(R) + 1$.
\end{cor}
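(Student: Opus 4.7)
\emph{Proof plan.} The plan is to establish matching upper and lower bounds for $\gldim(S)$, both equal to $\gldim(R) + 1$.

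For the upper bound, I would invoke the inequality $\gldim(S) \leq \gldim(\gr(R)) + 1$ recalled from Wang in subsection \ref{subsection: lifting} (applied to $S \in \SPS^1(R)$). Iterating Lemma \ref{lem: gr under new filtration} along the tower defining $R \in \SPS^n(k)$ identifies $\gr(R)$ with a pure-automorphic iterated skew polynomial ring over $k$ in $n$ indeterminates, which is classically known to have global dimension $n$. Hence $\gldim(S) \leq n + 1$; combined with the fact (provable by induction on $n$ using this very corollary, with the trivial base case $\gldim(k) = 0$) that $\gldim(R) = n$, this is $\gldim(S) \leq \gldim(R) + 1$.

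For the lower bound, the key observation is that the common residue field $k = S/\mathfrak{n} = R/\mathfrak{m}$ is naturally a left $S$-module (killed by $\mathfrak{n}$) and is one-dimensional, hence cyclic, as a left $R$-module. Theorem \ref{thm: Exts} applied to $M = k$ therefore yields
\[
\Ext_S^{n+1}(k, S) \;\cong\; \Ext_R^n(k, R), \qquad n = \gldim(R).
\]
A straightforward induction on $n$ shows the right-hand side does not vanish: the base case $n = 0$ gives $R = k$ and $\Ext_k^0(k,k) = k \neq 0$, while the inductive step, writing $R = R'[[y;\sigma',\delta']]$ with $R' \in \SPS^{n-1}(k)$, is exactly another instance of Theorem \ref{thm: Exts}. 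Hence $\Ext_S^{n+1}(k, S) \neq 0$, forcing $\gldim(S) \geq n + 1 = \gldim(R) + 1$.

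The main obstacle has already been dispatched in Theorem \ref{thm: Exts}; this corollary is essentially its dimension-theoretic shadow at the residue field, matched against the Wang-type upper bound. The only subtlety is the apparent circularity between the corollary and the identity $\gldim(R) = n$ appearing in Theorem A: this is resolved by performing both inductions in lockstep on the rank, anchored at the trivial case $R = k$.
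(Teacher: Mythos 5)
Your argument is correct, but it takes a genuinely different route from the paper's on one half of the estimate. The paper works entirely inside the Ext machinery: it first invokes Brumer's result to reduce $\gldim$ to $\pd(k)$ on both sides, then uses the Cartan--Eilenberg nonvanishing criterion ($\pd_R(k)=n<\infty$ implies $\Ext_R^n(k,R)\neq 0$) together with Theorem \ref{thm: Exts} to get $\pd_S(k)\geq \pd_R(k)+1$, and obtains the reverse inequality by the \emph{same} duality run backwards (if $\pd_S(k)\geq \pd_R(k)+2$ then $\Ext_R^{\pd_R(k)+1}(k,R)\neq 0$, a contradiction), finishing with an induction to secure finiteness of $\pd_R(k)$. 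Your lower bound is essentially the paper's, and arguably cleaner: by iterating Theorem \ref{thm: Exts} down the tower you compute $\Ext_R^n(k,R)\cong\Ext_k^0(k,k)=k\neq 0$ directly, which sidesteps both Brumer and Cartan--Eilenberg for that direction. Your upper bound, however, is imported from the filtered--graded side: Wang's inequality $\gldim(S)\leq\gldim(\gr(R))+1$ plus the standard bound $\gldim(\gr(R))\leq n$ for an $n$-fold iterated skew polynomial ring over $k$ (note you only need this inequality, not the exact value $n$, so the slightly delicate claim that the iterated graded ring is pure automorphic is dispensable). The cost of your route is the apparent circularity with $\gldim(R)=n$, which you correctly resolve by running the induction on rank in lockstep; the paper's route avoids this entirely because its upper bound is relative ($\pd_S(k)\leq\pd_R(k)+1$) rather than absolute ($\gldim(S)\leq n+1$), and so needs no prior knowledge of $\gldim(R)$. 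Both proofs are sound; the paper's is more self-contained, yours makes the role of the associated graded ring explicit and yields the concrete computation $\Ext_S^{n+1}(k,S)\cong k$ as a byproduct.
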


\begin{proof}
We have augmentation maps $R\to k$ and $S\to k$, so by \cite[Corollary 3.7]{brumer}, we have $\gldim(R) = \pd_R(k)$ and $\gldim(S) = \pd_S(k)$.

Suppose that $\pd_R(k) = n < \infty$. Then by \cite[VI, Ex. 9]{cartan-eilenberg}, we have $\Ext_R^n(k,R)\neq 0$, and hence by Theorem \ref{thm: Exts}, $\Ext_S^{n+1}(k,S)\neq 0$, so that $\pd_S(k)\geq n+1$.

Now assume (for contradiction) that $\pd_S(k)\geq n+2$. By the same argument, this gives us $\Ext_S^{n+2}(k,S)\neq 0$, and hence $\Ext_R^{n+1}(k,R)\neq 0$. But this implies that $\pd_R(k) \geq n+1$, a contradiction. So we must have $\pd_S(k) = n+1 = \pd_R(k) + 1$.

Finally, as $\pd_k(k) = 0$, it is easy to see by induction on the rank of $R$ (as an iterated local skew power series ring over $k$) that we always have $\pd_R(k) < \infty$.
\end{proof}

\begin{cor}\label{cor: gldim}
Let $k$ be a field and $R\in\SPS^n(k)$. Then $\gldim(R) = n$.\qed
\end{cor}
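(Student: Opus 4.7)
The plan is to prove this by a short induction on the rank $n$, using the immediately preceding corollary (which computes $\gldim(S) = \gldim(R) + 1$ when $S = R[[x;\sigma,\delta]]\in\SPS(R)$).

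For the base case $n = 0$, I would just note that $R = k$ is a field, so $\gldim(R) = 0$. For the inductive step, assuming the result holds for all rings in $\SPS^{n-1}(k)$, I would take $R\in\SPS^n(k)$ and write it as $R = T[[x;\sigma,\delta]]\in\SPS(T)$ for some $T\in\SPS^{n-1}(k)$, as guaranteed by Definition \ref{defn: SPS^n}. The inductive hypothesis gives $\gldim(T) = n-1$, and applying the preceding corollary yields $\gldim(R) = \gldim(T) + 1 = n$.

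There is essentially no obstacle here: the real content has already been done in the preceding corollary, and this result is just the clean inductive packaging of it. The only thing worth double-checking is that the recursion on the structure of $R\in\SPS^n(k)$ matches the recursion in the corollary, which it does by construction.
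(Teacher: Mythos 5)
Your proof is correct and is exactly the argument the paper intends by marking this corollary with \qed: an immediate induction on $n$ using the preceding corollary, with base case $R = k$ of global dimension $0$. Nothing further is needed.
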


\bibliography{../research/biblio/biblio-all}

\begin{thebibliography}{10}

\bibitem{ardakov-Kdim}
K.~Ardakov.
\newblock Krull dimension of {I}wasawa algebras.
\newblock {\em J. Algebra}, 280:190--206, 2004.

\bibitem{bergen-grzeszczuk-skew}
Jeffrey Bergen and Piotr Grzeszczuk.
\newblock Skew power series rings of derivation type.
\newblock {\em Journal of Algebra and its Applications}, 10(6):1383--1399,
  2011.

\bibitem{brown-goodearl}
K.A. Brown and K.R. Goodearl.
\newblock {\em Lectures on Algebraic Quantum Groups}.
\newblock Advanced courses in mathematics -- CRM Barcelona. Birkhäuser, 2002.

\bibitem{brumer}
A.~Brumer.
\newblock Pseudocompact algebras, profinite groups and class formations.
\newblock {\em Journal of Algebra}, 4:442--470, 1966.

\bibitem{cartan-eilenberg}
Henri Cartan and Samuel Eilenberg.
\newblock {\em Homological Algebra}.
\newblock Princeton University Press, 1956.

\bibitem{chan-wu-zhang}
D.~Chan, Q.-S. Wu, and J.J. Zhang.
\newblock Pre-balanced dualizing complexes.
\newblock {\em Israel J. Math.}, 132(1):285--314, 2002.

\bibitem{chase}
S.U. Chase.
\newblock Direct products of modules.
\newblock {\em Trans. Amer. Math. Soc.}, 97(3):457--473, 1960.

\bibitem{DDMS}
J.D. Dixon, M.P.F. du~Sautoy, A.~Mann, and D.~Segal.
\newblock {\em Analytic Pro-$p$ Groups}.
\newblock Cambridge University Press, 1999.

\bibitem{greenfeld-smoktunowicz-ziembowski}
Be'eri Greenfeld, Agata Smoktunowicz, and Micha\l{} Ziembowski.
\newblock On radicals of {O}re etensions and related questions.
\newblock Preprint, \texttt{arXiv: 1702.08103v2}, 2017.

\bibitem{horton}
K.L. Horton.
\newblock The prime and primitive spectra of multiparameter quantum sympectic
  and {E}uclidean spaces.
\newblock {\em Comm. Alg.}, 31(10):4713--4743, 2003.

\bibitem{LVO}
Li~Huishi and Freddy {van Oystaeyen}.
\newblock {\em Zariskian Filtrations}.
\newblock Springer Science+Business Media, 1996.

\bibitem{lazard}
Michel Lazard.
\newblock Groupes analytiques $p$-adiques.
\newblock {\em Publications Math\'ematiques de l'IH\'ES}, 26:5--219, 1965.

\bibitem{letzter-noeth-skew}
Edward~S. Letzter.
\newblock Prime ideals of noetherian skew power series rings.
\newblock {\em Israel J. Math.}, 192:67--81, 2012.

\bibitem{letzter-wang-q-comm}
Edward~S. Letzter and Linhong Wang.
\newblock Prime ideals of $q$-commutative power series rings.
\newblock {\em Algebr. Represent. Theor.}, 14:1003--1023, 2011.

\bibitem{MR}
J.C. McConnell and J.C. Robson.
\newblock {\em Noncommutative Noetherian Rings}.
\newblock American Mathematical Society, 2001.

\bibitem{rinehart-rosenberg}
George~S. Rinehart and Alex Rosenberg.
\newblock The global dimensions of {O}re extensions and {W}eyl algebras.
\newblock {\em Algebra, Topology, and Category Theory}, pages 169--180, 0.

\bibitem{roseblade}
J.E. Roseblade.
\newblock Prime ideals in group rings of polycyclic groups.
\newblock {\em Proc. London Math. Soc.}, 36:385--447, 1978.

\bibitem{schneider-venjakob-codim}
Peter Schneider and Otmar Venjakob.
\newblock On the codimension of modules over skew power series rings with
  applications to {I}wasawa algebras.
\newblock {\em J. Pure Appl. Algebra}, 204:349--367, 2005.

\bibitem{schneider-venjakob-localisations}
Peter Schneider and Otmar Venjakob.
\newblock Localisations and completions of skew power series rings.
\newblock {\em Am. J. Math.}, 132(1):1--36, 2010.

\bibitem{venjakob}
Otmar Venjakob.
\newblock A noncommutative {W}eierstrass preparation theorem and applications
  to {I}wasawa theory.
\newblock {\em J. Reine Angew. Math.}, 559:153--191, 2003.

\bibitem{wang-quantum}
Linhong Wang.
\newblock Completions of quantum coordinate rings.
\newblock {\em Proc. Amer. Math. Soc.}, 137(3):911--919, 2009.

\bibitem{woods-struct-of-G}
William Woods.
\newblock On the structure of virtually nilpotent compact $p$-adic analytic
  groups.
\newblock {\em J. Group Theory}, 21(1):165--188, 2018.

\end{thebibliography}
\bibliographystyle{plain}

\end{document}